\documentclass[11pt]{article}

\usepackage{fullpage}

\usepackage{enumerate}

\usepackage{amsmath}
\usepackage{amsthm}
\usepackage{amsfonts}
\usepackage{amssymb}
\usepackage{mathtools}

\usepackage{authblk}

\newtheorem{theorem}{Theorem}[section]
\newtheorem{definition}{Definition}[section]
\newtheorem{lemma}{Lemma}[section]
\newtheorem{remark}{Remark}[section]
\newtheorem{corollary}{Corollary}[section]
\newtheorem{example}{Example}[section]

\def \CV{\overline{\cov(\mes_G (V))}}
\def \CVm{\cov(\mes_G (V))}
\def \profile{\Psi}
\def \NEprofile{\tilde{\Psi}}
\def \dual{\Phi}
\def \to{\rightarrow}

\def \states{V}
\def \R {\mathbb{R}}
\def \N {\mathbb{N}}

\def \J {J}

\def \C {\mathcal{C}}
\def \A {\mathcal{A}}
\def \B {\mathcal{B}}
\def \Sc {\mathcal{S}}

\def \H {\textbf{H}}

\def \GM{(I, \lambda ,V,(A_i)_{i\in I} , \J )}
\def \ae {$\lambda$-almost every $i \in I$}

\def \y {\gamma}
\def \e {\eta}
\def \NEe {\tilde{\eta}}
\def \m {m}
\def \mes{\mathcal{P}}

\def \d{\mathrm{d}}
\def \cov{\mathrm{cov}}
\def \norm{\|}
\def \abs{|}

\title{Learning in anonymous nonatomic games \\ with applications to first-order mean field games}
\author[1]{Saeed Hadikhanloo\footnote{Universit\'e Paris-Dauphine, PSL Research University, CNRS, UMR [7534], CEREMADE, 75016 PARIS, FRANCE}}
\date{}

\begin{document}

\maketitle

\begin{abstract}
We introduce a model of anonymous games with the player dependent action sets. We propose several learning procedures based on the well-known Fictitious Play and Online Mirror Descent and prove their convergence to equilibrium under the classical monotonicity condition. Typical examples are first-order mean field games.
\end{abstract}

\tableofcontents

\section{Introduction}
Mean field games (MFGs) are symmetric differential games with an infinite number of non-atomic players. The model was first introduced simultaneously by Lasry, Lions (\cite{ LL06cr2},\cite{LL07mf}) and Huang, Caines, Malham\'e (\cite{HCMdec2003},\cite{HCMieeeAC06}). In the game, each player chooses a control and incurs a cost depending on their control and the evolving distribution of all the other players' states. More formally, a typical player chooses a path $ \y : [0 , T] \to \R^d , \; \y(0) = x$ via a control $ \d \y_t = \alpha_t \d t $
and incurs the cost:
$$\J ( \y , (m_t)_{t\in [0,T]} ) = \int_{0}^{T} \left( L(\y_t , \alpha_t) + f(\y_t , m_t) \right) \d t + g(\y_T , m_T)$$
where $(m_t)_{t\in [0,T]} \subseteq \mes(\R^d)$ is the evolving distribution of other players. The Lagrangian $L : \R^d \times \R^d \to \R$ captures the running cost depending on the velocity and $f,g : \R^d \times \mes(\R^d) \to \R$ are the couplings describing the interaction cost of the player with the distribution of the other players. The optimal control of a player can be obtained by solving the Hamilton-Jacobi equation:
$$-\partial_t u +H(x,\nabla u(t,x)) =f(x,m_t), \quad u(T,x)=g(x, m_T)$$
with $H(x,p)= - \inf_{v \in \R^d} \langle p , v \rangle + L(x,v)$. The desired optimal control will be computed as $$\tilde{\alpha}(t,x) = - D_p H(x,\nabla u(t,x)).$$
If every player chooses their optimal control, the evolving distribution of players is given by the Fokker-Planck equation: 
$$ \partial_t m - { \rm div} ( mD_p H (x,\nabla u)) =0, \quad m(0,x)=m_0(x).$$
Hence the notion of Nash Equilibrium (or stability) is captured by the system of coupled Hamilton-Jacobi (backward) and Fokker-Planck (forward) equations written above.

The equilibrium configuration in MFGs is quite complicated and its occurrence requires a huge amount of information and a large degree of cooperation between players. The question of formation of equilibrium arises naturally. Thus, one would guess that the MFG equilibrium is justifiable because there is a reasonable way of adapting (or learning) of players via observation and revision of the beliefs about the other players' behavior.

There are several learning procedures in games with finitely many players and/or a finite number of actions per player (see for example the monograph \cite{FL98}). Here we extend two of the most known of them to nonatomic games with continuous action sets: Fictitious Play (Brown \cite{BrownFictitiousPlay}) and Online Mirror Descent (Nemirovski, Yudin \cite{NemYud83}). In the current article, our main purpose is to prove the convergence of these procedures to Nash equilibrium in first-order MFGs; however, since the approach can be used for a larger class of games, we work under a general framework.

Anonymous games model the conflict situations where the dependency of the costs of the players to the action of their adversaries, are through the distribution of their chosen actions. The \textit{population game} is a class of anonymous games with a set of of non-atomic players who choose among a finite number of actions (see for example \cite{HS2009}). Mas-Collel \cite{Mas1984} proposed a type of anonymous games with a continuum of players where actions are chosen from an identical set and the cost functions depend on the players' types. Our approach here is different; in the sense that  actions may be chosen from different sets, but the cost functions are identical. For example this is the case in first-order MFGs; the players choose the paths with fixed (player dependent) initial positions as their actions, and the cost function is identical for all players.

In nonatomic games, Nash equilibria are defined up to a zero-measure set of players, i.e. the infinitesimal subsets which has no effect on the whole result of the game. Similar to the previous works (Schmeidler \cite{Schmeidler}, Mas-Collel \cite{Mas1984}) we employ a fixed point theorem to infer the existence of equilibrium under continuity and compactness.

Here we work under a well-known monotonicity condition introduced in game theory by Rosen \cite{Ros65}. The strict monotonicity yields the uniqueness of the Nash equilibrium (Haufbauer, Sandholm \cite{HS2009}, Blanchet, Carlier \cite{BC2014}). A similar definition appears in MFG (Lasry, Lions \cite{ LL06cr2},\cite{LL07mf}) which also implies the uniqueness of equilibrium. In anonymous games with (not necessarily strict) monotone costs, equilibrium uniqueness is a direct consequence of monotonicity and the "unique minimiser" condition. 

Haufbauer, Sandholm \cite{HS2009} introduced the \textit{stable games} as the population games with monotone costs. They prove the convergence of many learning dynamics including Best Response Dynamic, Replicator Dynamic, ... to the unique equilibrium. Their techniques have inspired our approach of the convergence results for Fictitious Play (Section \ref{section:Fictitious Play in Anonymous Games}) which is nothing but a discrete version of the best response dynamic.

As introduced by Brown\cite{BrownFictitiousPlay} and Robinson\cite{RO51}, fictitious play describes a learning procedure in which a fixed game is played over and over in repeated discrete rounds. At every round, each player sets their belief as the empirical frequency of play of the player's opponents, and then chooses its best action with respect to this belief. Convergence to a Nash equilibrium has been proved for different classes of finite games, for example potential games (Monderer, Shapley \cite{MondrerShapley2}), zero sum games (Robinson \cite{RO51}) and $2 \times 2$ games (Miyasawa \cite{Mi61}). Cardaliaguet, Hadikhanloo \cite{CDHD2015} proved the convergence of fictitious play in first and second order potential MFGs. Our approach here covers a different class of first-order MFGs, i.e. the ones with monotone costs.

The second procedure we consider is Online Mirror Descent (OMD). The method was first introduced by Nemirovski, Yudin \cite{NemYud83}, as a generalization of standard gradient descent. The form of the algorithm is closely related to the notion of No-Regret procedures in Online Optimization. A good explanatory introduction can be found in Shalev Shwartz\cite{Shwartz2011}. Roughly speaking, the procedure deals with two variables, a primal one and a dual one. They are revised at every round; the dual is revised by using the sub-gradient of the objective function and the primal is obtained by a \textit{quasi} projection via a strongly convex penalty function on the convex domain. Mertikopolous \cite{Mert2016} proved the convergence of OMD in a large class of games with convex action sets. Here we examine the convergence properties of OMD in monotone anonymous games with a possibly infinite number of players.

In the proof of convergence of both procedures to the Nash equilibrium, we define a value $ \phi_n \in \R, n \in \N$ measuring how much the actual behavior at step $n$ is far from being an equilibrium; in Fictitious Play the quantity $\phi_n$ is defined by the best response function and in OMD by using the Fenchel coupling. Then we prove $\lim_{n \to \infty} \phi_n = 0$ that gives our desired convergence toward the equilibrium.

Here is how the paper is organized: in Section 2 a general model of anonymous game is proposed. The notion of Nash equilibrium is reviewed and the existence is proved under general continuity conditions. Then we introduce the definition of monotonicity in terms of the cost function, and its consequence on the uniqueness of the Nash equilibrium. Section 3 is devoted to the definition of fictitious play and its convergence under Lipschitz conditions. Section 4 deals with the online mirror descent algorithm and its convergence. Section 5 shows that the first-order MFG can be considered as an example of anonymous games and shows that the previous results can be applied under suitable conditions. For sake of completeness, we provide in the Appendix some disintegration theorems which are used in the proofs.

\section{Anonymous Games}

\subsection{Model}
Let us introduce our general model of anonymous game $G$. For a measure space $X$ let $\mes(X)$ denotes the set of probability measures on $X$. Let $I$ be the set of players and $\lambda \in \mes(I)$ a prior non-atomic probability measure on $I$ modeling the repartition of players on $I$. Let $V$ be a measure space. For every player $i\in I$, let $A_i \subset \states$ be the action set of $i$. Define the set of admissible profiles of actions
$$ \A = \{ \profile : I \to V \; {\rm measurable} \; \abs \; \profile (i) \in A_i \; \text{for $\lambda$-almost every} \; i \in I \}.$$
We identify the action profiles up to $\lambda-$zero measure subsets of $I$, i.e. $\profile_1 = \profile_2$ iff $\profile_1 (i) = \profile_2(i)$ for \ae. The induced measure of a typical profile $\Psi\in \A$ on the set of actions, that captures the portion of players who have chosen a given subset of actions, is denoted by $ \profile \sharp \lambda  \in \mes(V)$. More precisely, $\profile\sharp \lambda$ is the push-forward of measure of $\lambda$ by application $\profile$, that is for every measurable set $ B \subseteq V$ we have $\profile\sharp \lambda(B) = \lambda(\profile^{-1}(B))$. Since the set of $ \profile \sharp \lambda$ for all admissible profiles $\profile$, may be different from $\mes(V)$, it is sufficient to work with:
$$\mes_G (V) = \{ \; \eta \in \mes (V) \; \abs \; \exists \; \profile \in \A: \; \eta = \profile \sharp \lambda \; \}.$$
For every $i\in I$ let $c_i : \A \to \R$ be the cost payed by player $i$. We call the game anonymous, if for every player $i\in I$, there exists $\J_i : A_i \times \mes_G (V) \to \R$ such that $c_i (\profile) = \J_i (\profile(i) , \profile \sharp \lambda)$. In other words, $\J_i (a,\eta)$ captures the cost endured by a typical player $i\in I$, whose action is $a\in A_i$ while facing the distribution of actions $\eta \in \mes(V)$ chosen by other players. We consider here anonymous games where the players have identical cost function, i.e. there is $\J : V \times \mes_G (V) \to \R$ such that for every $i\in I$ we have $\J_i = \J$. We use the following notation for referring to such game: $$G=\GM.$$

\begin{example}[Population Game \cite{HS2009}]
Set $I = [0,1]$ be the set of players and $\lambda$ the Lebesgue measure as the distribution of players on $I$. Let $N\in \N$ represents the number of populations in the game i.e. there is a partition of players $I_1,I_2, \cdots , I_N \subseteq I$ where for every $1 \leq p \leq N , I_p \subseteq I$ represents the set of players belonging to population $p$. For every player $i\in I$ suppose the set of actions $A_i$ is finite and depends only on the population where the player $i$ comes from, i.e. for every population $p$ there is $S_p$ such that for all $i \in I_p$ we have $A_i = S_p$. Set $V=\cup_{p} S_p$. For every population $p$ the cost function has the form $J_p : S_p \times \Delta(V) \to \R$ where $J_p(a,(m_j)_{1 \leq j \leq \abs V\abs})$ is the cost payed by a typical player in population $p$ whose action is $a\in S_p$ while facing $(m_j)_{1 \leq j \leq \abs V\abs}$ where  for every $1 \leq j \leq \abs V\abs$, $m_j \geq 0$ is the portion of players who have chosen action $j\in V$. The form of the cost function illustrates the fact that the population games are anonymous.
\end{example}

\begin{example}
	In section 5, we show that the First order MFG is an anonymous game with suitable actions sets and cost function.
\end{example}


\subsection{Nash Equilibria}
Inspired from the notion of Nash equilibrium in non-atomic games (see Schmeidler \cite{Schmeidler}, Mas-Collel \cite{Mas1984}), we omit the effect of zero measure subsets of players in the definition of equilibria:
\begin{definition}
a profile $\NEprofile \in \mathcal{A} $ is called a Nash equilibrium if 
$$\NEprofile(i) \in \arg \min_{a\in A_i} \J (a, \NEprofile \sharp \lambda ) \quad \text{for \ae}.$$
The corresponding distribution $\NEe = \NEprofile \sharp \lambda$ is called a Nash (or equilibrium) distribution. 
\end{definition}

One can note that the definition of Nash equilibrium highly depends on the prior distribution of players i.e. $\lambda$. The following Theorem gives a sufficient condition under which the game possesses at least one equilibrium. Let $I$ be a topological and $V$ be a metric space (with $\B(I) , \B(V)$ as their $\sigma-$fields). Suppose the $A_i$'s are uniformly bounded for $\lambda$-almost every $i \in I$, i.e. there exist $M >0, v\in V$ such that:  
\begin{equation}\label{BoundednessCondition}
	\text{for $\lambda-$almost every $i \in I$ and every $a\in A_i$}: \quad \d_V(v,a) < M.
\end{equation}
This condition gives us $\mes_G(V) \subseteq \mes_1(V)$ where:
$$\mes_1(V) = \{ \; \e \in \mes(V) \; \abs \; \exists v \in V: \; \int_{V} \d_V (v,a) \; \d \e (a) < +\infty \; \}$$
endowed with the metric:
$$\d_1 (\e_1 , \e_2) = \sup_{f: V \to \R , \; \text{1-Lipschitz}} \int_{V} f(a) \; \d (\e_1 - \e_2)(a).$$
For technical reasons we work with closure convex hull of $\mes_G(V)$ i.e. $\CV$.
\begin{definition}
We say $G=(I, \lambda ,V,(A_i)_{i\in I} , \J )$ satisfies the \textit{Unique Minimiser} condition, if for every $\e \in \overline{\cov(\mes_G (V))} $, there exists $I_\e \subseteq I $ with $\lambda(I \setminus I_\e) = 0$, such that for all $i\in I_\e$ there is exactly one $a\in A_i$ minimizing $\J (\cdot,\eta)$ in $A_i$. 
\end{definition}
Informally, the definition says facing to every distribution of actions, (almost) every player has a unique best response.
\begin{definition}
	A correspondence $A:I \to V, A(i)=A_i$ is called continuous if:
	\begin{itemize}
		\item
		it is upper semi continuous i.e. the graph $\{(i,a) \in I\times V \; \abs \; a\in A_i \}$ is closed in $I \times V$,
		\item
		it is lower semi continuous i.e. for every open set $U\subseteq V$ the set $\{ i \in I \; \abs \; A_i \cap U \neq \emptyset \}$ is open in $I$.
	\end{itemize}
\end{definition}
For more detailed theorems about set valued maps, see \cite{AubFra}.The following Theorem asserts sufficient conditions for existence of at least a Nash equilibrium:
\begin{theorem}\label{THM:NashExist}
Let $G=(I, \lambda ,V,(A_i)_{i\in I} , \J )$ be an anonymous game. Suppose the following conditions (\H) hold:
\begin{enumerate}[(i)]
	\item
	the correspondence $A : I \to V  , \; A(i)=A_i$ is continuous and compact valued,
	\item
	There is an extension $\J: V \times \CV \to \R$ which is lower semi-continuous,
	\item
	the function ${\rm Min} : I \times \mes_G (V) \to \R, \; {\rm Min} (i , \e) := \min_{a \in A_i} \J(a,\e)$ is continuous,
	\item
	$\CV$ is compact,
	\item
	$G$ satisfies the unique minimiser condition.
	\end{enumerate}
Then $G$ will admit at least a Nash equilibrium.
\end{theorem}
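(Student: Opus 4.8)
The plan is to recast the existence of an equilibrium as a fixed point problem for a best-response map on the compact convex set $\CV$, and then to invoke the Schauder--Tychonoff theorem. The key point is that the unique minimiser condition (v) lets us replace the best-response \emph{correspondence} by a single-valued \emph{map} defined on all of $\CV$.

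\emph{Construction of the best-response map.} Fix $\e \in \CV$. By (ii) the function $a \mapsto \J(a,\e)$ is lower semi-continuous and by (i) each $A_i$ is compact, so the minimum $\mathrm{Min}(i,\e) = \min_{a\in A_i}\J(a,\e)$ is attained; by (v) it is attained at a single point $a_i(\e)$ for \ae. To assemble these pointwise choices into an admissible profile I would use a measurable selection theorem: the correspondence $i \rightrightarrows \{a \in A_i : \J(a,\e) = \mathrm{Min}(i,\e)\}$ has nonempty compact values and a closed graph (by upper semi-continuity of $A$, lower semi-continuity of $\J$, and continuity of $i \mapsto \mathrm{Min}(i,\e)$), hence admits a $\lambda$-measurable selection $\profile_\e$, which by (v) coincides $\lambda$-a.e. with $i \mapsto a_i(\e)$. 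Thus $\profile_\e \in \A$, and I set $\mathrm{BR}(\e) := \profile_\e \sharp \lambda \in \mes_G(V) \subseteq \CV$. Any fixed point $\e^* = \mathrm{BR}(\e^*)$ then gives a profile $\profile_{\e^*}$ with $\profile_{\e^*}\sharp\lambda = \e^*$ and $\profile_{\e^*}(i) \in \arg\min_{a\in A_i}\J(a,\e^*)$ for \ae, i.e. precisely a Nash equilibrium.

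\emph{Continuity of $\mathrm{BR}$, which I expect to be the main obstacle.} Let $\e_n \to \e$ in $\d_1$. I would first establish the pointwise convergence $a_i(\e_n) \to a_i(\e)$ for \ae. By the uniform bound \eqref{BoundednessCondition} and compactness of $A_i$, any subsequence of $(a_i(\e_n))_n$ has a further subsequence converging to some $a^* \in A_i$; lower semi-continuity of $\J$ gives $\J(a^*,\e) \le \liminf_n \J(a_i(\e_n),\e_n) = \liminf_n \mathrm{Min}(i,\e_n)$, while continuity of $\mathrm{Min}$ gives $\liminf_n \mathrm{Min}(i,\e_n) = \mathrm{Min}(i,\e)$. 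Since $a^* \in A_i$ also forces $\J(a^*,\e) \ge \mathrm{Min}(i,\e)$, the point $a^*$ minimises $\J(\cdot,\e)$ over $A_i$, so $a^* = a_i(\e)$ by (v); as every subsequence has a further subsequence with the same limit, $a_i(\e_n) \to a_i(\e)$. To upgrade this to convergence of push-forwards, take any $1$-Lipschitz $f : V \to \R$; the integrands $i \mapsto f(a_i(\e_n))$ converge $\lambda$-a.e. and are uniformly bounded by \eqref{BoundednessCondition}, so dominated convergence yields $\int_V f\,\d(\mathrm{BR}(\e_n)) = \int_I f(a_i(\e_n))\,\d\lambda(i) \to \int_I f(a_i(\e))\,\d\lambda(i) = \int_V f\,\d(\mathrm{BR}(\e))$. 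Taking the supremum over such $f$ gives $\d_1(\mathrm{BR}(\e_n),\mathrm{BR}(\e)) \to 0$.

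\emph{Fixed point and conclusion.} The set $\CV$ is convex by construction and compact by (iv), sitting inside the locally convex space of finite signed measures on $V$; since $\mathrm{BR} : \CV \to \CV$ is continuous, the Schauder--Tychonoff theorem yields $\e^* \in \CV$ with $\mathrm{BR}(\e^*) = \e^*$, and the associated $\profile_{\e^*}$ is the desired equilibrium. The two delicate points are the measurable selection, needed so that the pointwise best responses form a genuine element of $\A$ rather than an arbitrary choice function, and the passage from a.e. pointwise convergence of the selections to $\d_1$-convergence of their images; both are controlled by the compact-valuedness in (i) and the uniform bound \eqref{BoundednessCondition}.
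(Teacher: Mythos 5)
Your proposal is correct and follows essentially the same route as the paper: a single-valued best-response distribution map on $\CV$ (single-valued thanks to the unique minimiser condition), continuity established by the same subsequence argument (compact values of $A$, lower semi-continuity of $\J$, continuity of ${\rm Min}$, then dominated convergence to pass to $\d_1$-convergence of push-forwards), and Schauder's fixed point theorem on the compact convex set $\CV$. The only cosmetic difference is that you obtain measurability of the selection $i \mapsto a_i(\e)$ from a measurable selection theorem for the closed-graph argmin correspondence, whereas the paper derives it from the $\lambda$-a.e.\ continuity of $i \mapsto BR(i,\e)$ (its Lemma \ref{LEMMA:ContBR}); both are legitimate.
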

Assumptions $(i-iv)$ provide enough continuity and compactness conditions we need for the fixed point theorem. The assumption $(v)$ allows us to prove the existence of pure Nash equilibrium. In addition, it is crucial for the uniqueness of equilibrium and convergence results in learning procedures that we will propose. So we add it here as an assumption for being coherent in the entire article. Before we start the proof let us provide some lemmas which will be used here and in the rest of paper:
\begin{lemma} \label{LEMMA:ContBR}
Define the best response correspondence as follows
$$BR: I \times \overline{\cov(\mes_G (V))} \to V, \quad BR(i , \e) = \arg \min_{a \in A_i} \J (a,\e).$$
If the (\H) conditions hold, then for every $\e \in \mes_G(V)$ the correspondence $BR(\cdot , \e) : I \to V $, that is almost everywhere singleton, is almost everywhere continuous and hence measurable.
\end{lemma}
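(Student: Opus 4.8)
The plan is to prove three facts in turn: that $BR(\cdot,\eta)$ is single-valued off a $\lambda$-null set, that it has closed graph, and that closed graph together with the compactness of the action sets upgrades its almost-everywhere-defined selection to a continuous, hence measurable, map. Single-valuedness is immediate: fixing $\eta\in\mes_G(V)\subseteq\CV$ and invoking the unique minimiser condition (v), there is a set $I_\eta$ with $\lambda(I\setminus I_\eta)=0$ on which $\arg\min_{a\in A_i}\J(a,\eta)$ is a singleton; write $b(i)$ for its unique element, so that $BR(i,\eta)=\{b(i)\}$ for every $i\in I_\eta$.

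Next I would show that the correspondence $B:=BR(\cdot,\eta):I\to V$ has closed graph. Take $(i_n,a_n)\to(i_0,a)$ with $a_n\in B(i_n)$, so that $a_n\in A_{i_n}$ and $\J(a_n,\eta)={\rm Min}(i_n,\eta)$. The closed graph of $A$ in (i) gives $a\in A_{i_0}$; the joint lower semicontinuity of the extension $\J$ from (ii) gives $\J(a,\eta)\le\liminf_n\J(a_n,\eta)$; and the continuity of ${\rm Min}$ from (iii) gives ${\rm Min}(i_n,\eta)\to{\rm Min}(i_0,\eta)$. Chaining these yields $\J(a,\eta)\le{\rm Min}(i_0,\eta)$, while $a\in A_{i_0}$ forces the reverse inequality, so $a\in B(i_0)$. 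This is exactly the upper-semicontinuity half of Berge's maximum theorem, and it is the only step where (i)--(iii) are used together.

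I would then turn the closed graph into continuity of $b$ along $I_\eta$. Fix $i_0\in I_\eta$ and let $i_n\to i_0$ with $i_n\in I_\eta$. Since $A$ is upper semicontinuous with compact values, the image under $A$ of the compact set $K:=\{i_0\}\cup\{i_n:n\in\N\}$ is relatively compact (see \cite{AubFra}), and $b(i_n)\in A_{i_n}\subseteq A(K)$; hence every subsequence of $(b(i_n))$ has a further subsequence converging to some $a^*$, and the closed graph forces $a^*\in B(i_0)=\{b(i_0)\}$. As $b(i_0)$ is the only possible cluster point, the whole sequence converges to it, proving $b$ continuous at $i_0$. Being continuous on the full-measure set $I_\eta$, $b$ is $\lambda$-measurable, and any measurable extension to $I$ is a measurable selection coinciding with $BR(\cdot,\eta)$ for \ae.

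The crux is the relative compactness of $A(K)$ in the third step: it is what lets us pass to convergent subsequences when $V$ is infinite-dimensional, as in the first-order MFG application where $V$ is a path space, and it is precisely here that compact-valuedness and upper semicontinuity of the constraint correspondence are indispensable --- lower semicontinuity of $\J$ alone cannot replace it. The one point to check carefully is that the paper's closed-graph notion of upper semicontinuity, combined with compact values and the uniform boundedness \eqref{BoundednessCondition}, does deliver this compactness; granting that, the remainder is the standard Berge argument together with the subsequence trick.
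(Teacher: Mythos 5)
Your proposal is correct and follows essentially the same route as the paper: single-valuedness from the unique minimiser condition, precompactness of $\bigcup_n A_{i_n}\cup A_{i_0}$ from compact-valuedness and upper semicontinuity of $A$, and identification of every accumulation point with $BR(i_0,\e)$ via the closed graph of $A$, lower semicontinuity of $\J$, and continuity of ${\rm Min}$. The only cosmetic difference is that you isolate the closed-graph property of $BR(\cdot,\e)$ as a separate step, whereas the paper performs the subsequence extraction and limit identification in a single pass.
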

\begin{proof}
Fix $\e \in \overline{\cov(\mes_G (V))}$. According to the unique minimiser condition there exists $I_{\e} \subseteq I$ with $\lambda (I \setminus I_{\e}) = 0$, such that $BR(i,\e)$ is singleton for every $i\in I_{\e}$. We will show the continuity of the restricted best response function $BR(\cdot , \e) : I_{\e} \to V$ which completes our proof. Consider $i,i_n \in I_{\e}$ such that $i_n \to i$. Set $a_n = BR (i_n , \e)$. The set $\{ a_n \}_{n \in \N}$ is precompact since $A:I \to V$ is a compact valued correspondence and hence $A(\{i_n \}_{n\in \N} \cup \{ i \}) = \cup_{n} A_{i_n} \cup A_i$ is compact. Suppose $\tilde{a} \in V$ is an accumulation point of $\{ a_n \}_{n \in \N}$. So there is a sub-sequence $\{ a_{n_k} \}_{k \in \N}$ such that $\lim_{k\to \infty} a_{n_k}= \tilde{a}$. We have $\tilde{a} \in A_i$ since the correspondence $A:I \to V$ is upper semi continuous and $a_n \in A_{i_n}$. By definition $\J (a_n , \e) = {\rm Min} (i_n , \e)$ which gives:
$$  \J(\tilde{a},\e) \leq \liminf_{n_k} \J (a_{n_k} , \e) = \liminf_{n_k} \; {\rm Min} (i_{n_k} , \e) = {\rm Min} (i , \e),$$
since the Min function is continuous. It yields $\tilde{a} = BR(i,\e)$. So every accumulation point of $\{a_n\}_{n\in \N}$ should be $BR(i,\e)$ which shows $a_n \to BR(i,\e)$.
\end{proof}

\begin{lemma}\label{LEMMA:ContTheta}
Define the best response distribution function $\Theta : \overline{\cov(\mes_G (V))} \to \mes_G (V)$ as follows:
$$ \Theta(\e) = BR(\cdot , \e) \sharp \lambda, \quad \text{for every } \e\in \overline{\cov(\mes_G (V))}.$$
If the (\H) conditions hold then $\Theta$ is continuous.
\end{lemma}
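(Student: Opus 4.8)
The plan is to establish sequential continuity, which suffices since $\CV\subseteq\mes_1(V)$ is metrized by $\d_1$. So I fix $\e\in\CV$ and a sequence $\e_n\to\e$ in $\d_1$, and aim to show $\d_1(\Theta(\e_n),\Theta(\e))\to 0$. Writing $T_n(i)=BR(i,\e_n)$ and $T(i)=BR(i,\e)$ for the ($\lambda$-a.e.\ defined, by Lemma \ref{LEMMA:ContBR}) best-response maps, so that $\Theta(\e_n)=T_n\sharp\lambda$ and $\Theta(\e)=T\sharp\lambda$, I would test the definition of $\d_1$ against an arbitrary $1$-Lipschitz $f$ and use the change-of-variables formula for push-forwards together with $|f(T_n(i))-f(T(i))|\le\d_V(T_n(i),T(i))$. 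Taking the supremum over $f$ yields
$$\d_1(\Theta(\e_n),\Theta(\e))\;\le\;\int_I \d_V\bigl(BR(i,\e_n),BR(i,\e)\bigr)\,\d\lambda(i),$$
so the whole statement reduces to showing that the right-hand side tends to $0$.

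The first key step is pointwise convergence: for $\lambda$-almost every $i$, $BR(i,\e_n)\to BR(i,\e)$ in $V$. Since the unique-minimiser sets $I_{\e_n},I_\e$ each have full measure and there are only countably many of them, their intersection $I_0$ still has full measure, and on $I_0$ all the relevant best responses are genuine singletons. Fixing $i\in I_0$ and setting $a_n=BR(i,\e_n)$, the sequence lies in the compact set $A_i$ and is therefore precompact; I would argue, as in Lemma \ref{LEMMA:ContBR} but now letting both arguments move, that every accumulation point $\tilde a$ of $(a_n)$ equals $BR(i,\e)$. Indeed, if $a_{n_k}\to\tilde a$ then $\tilde a\in A_i$ by closedness of $A_i$, and combining the lower semicontinuity of $\J$ (condition (ii)) with the continuity of ${\rm Min}$ (condition (iii)) gives
$$\J(\tilde a,\e)\;\le\;\liminf_k \J(a_{n_k},\e_{n_k})\;=\;\liminf_k {\rm Min}(i,\e_{n_k})\;=\;{\rm Min}(i,\e);$$
since $\tilde a\in A_i$ forces the reverse inequality, $\tilde a$ minimises $\J(\cdot,\e)$ over $A_i$, hence $\tilde a=BR(i,\e)$ by the unique-minimiser condition. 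Precompactness together with uniqueness of the accumulation point then yields $a_n\to BR(i,\e)$.

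The second step passes from pointwise to $L^1$ convergence by dominated convergence. The boundedness hypothesis \eqref{BoundednessCondition} supplies, for $\lambda$-almost every $i$, the uniform bound $\d_V(BR(i,\e_n),BR(i,\e))\le \d_V(BR(i,\e_n),v)+\d_V(v,BR(i,\e))<2M$, and the constant $2M$ is $\lambda$-integrable because $\lambda$ is a probability measure. Thus the integrand converges to $0$ pointwise and is dominated, so $\int_I \d_V(BR(i,\e_n),BR(i,\e))\,\d\lambda\to 0$, which closes the argument via the estimate above.

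I expect the main obstacle to be the pointwise convergence step, namely upgrading the single-variable continuity of Lemma \ref{LEMMA:ContBR} to the situation where the measure argument $\e_n$ also varies. The delicate points there are the simultaneous control of the null sets $I_{\e_n}$ across the entire sequence (resolved by the countable intersection $I_0$) and the correct chaining of the lower semicontinuity of $\J$ with the continuity of ${\rm Min}$, which must be combined so that the limiting value is pinned \emph{exactly} at the minimum rather than merely bounded by it. Everything afterwards—the Lipschitz test-function estimate for $\d_1$ and the dominated-convergence passage—is routine.
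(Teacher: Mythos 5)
Your proof is correct and follows essentially the same route as the paper: pointwise convergence $BR(i,\e_n)\to BR(i,\e)$ on the full-measure countable intersection of the unique-minimiser sets, dominated convergence via the uniform bound $2M$ from \eqref{BoundednessCondition}, and the $1$-Lipschitz test-function estimate for $\d_1$. In fact you spell out the pointwise-convergence step (joint lower semicontinuity of $\J$ chained with continuity of ${\rm Min}$, compactness of $A_i$, uniqueness of the accumulation point) in more detail than the paper, which only gestures at it by reference to Lemma \ref{LEMMA:ContBR}.
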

\begin{proof}
Let $\e_n \to \e$. If $J = I_{\e} \cap_{n\in \N} I_{\e_n}$ then we have $\lambda(I \setminus J)=0$. One can show as for Lemma \ref{LEMMA:ContBR} that for every $i \in J$:
$$ BR(i,\e_n) \to BR(i,\e).$$
Since the $A_i$'s are uniformly bounded for $\lambda-$almost every $i \in J$, the dominated Lebesgue convergence Theorem implies $\int_{I} \d_V (BR(i,\e_n) , BR(i,\e) ) \; \d \lambda(i) \to 0$. Thus $\Theta(\e_n) \xrightarrow{\d_1} \Theta(\e)$ since:
$$\d_1(\Theta(\e_n) , \Theta(\e)) = \sup_{f: V \to \R , \; \text{1-Lipschitz}} \int_{V} f(v) \; \d (\Theta(\e_n) - \Theta(\e))(v) = $$ $$\sup_{f: V \to \R , \; \text{1-Lipschitz}} \int_{I}  \left( f(BR(i,\e_n)) - f(BR(i,\e)) \right) \; \d \lambda(i)\leq \int_{I} \d_V (BR(i,\e_n) , BR(i,\e) ) \; \d \lambda(i) \to 0.$$
\end{proof}

\begin{proof}[Proof of Theorem \ref{THM:NashExist}]

Consider the best response distribution function $\Theta$ defined in Lemma \ref{LEMMA:ContTheta}. We have by definition $$\Theta(\CV) \subset \mes_G(V) \subset \CV,$$ which implies that the image of $\Theta$ is precompact. Since $\Theta$ is continuous (Lemma \ref{LEMMA:ContTheta}) and $\overline{\cov(\mes_G (V))}$ is convex, by the Schauder's fixed point theorem, there is $\NEe \in \overline{\cov(\mes_G (V))}$ such that $\Theta(\NEe) = \NEe$. Since $\Theta(\NEe) = BR(\cdot , \NEe) \sharp \lambda \in \mes_G (V) $ so if we set $\NEprofile (\cdot)  = BR(\cdot , \NEe )\in \A$ then
$$\NEprofile \sharp \lambda = \NEe, \quad \NEprofile(i) \in \arg \min_{a\in A_i} \J (a, \NEe) \quad \text{for \ae}.$$
This means $\NEprofile$ is the desired Nash equilibrium.
\end{proof}

\subsection{Anonymous Games with Monotone Cost}

Here we give a definition of monotonicity and its additional consequences on the structure of the game and its equilibria. 
\begin{definition}
The anonymous game $G=\GM$ has a monotone cost $\J$ if for any $\e,\e' \in \CV$ one has:
$$\int_{V} \left| \J (a,\eta) \right|  \; \d \eta'(a) < +\infty,$$
and
$$
\int_{V} \left( \J (a,\eta) - \J (a,\eta') \right) \; \d (\eta - \eta')(a) \geq 0,
$$
and it is strict monotone if the later inequality holds strictly for $\e \neq \e'$.
\end{definition}
Intuitively, this condition describes the aversion of the players for choosing actions that are chosen by many of players i.e. congestion avoiding effect. In other words, on average the players dislike the crowded actions.
\begin{remark}\label{Col1}
If $\J$ is monotone and if $\NEprofile \in \mathcal{A}$ is a Nash equilibrium, then for every $\profile \in \mathcal{A}$ we have:
$$ \text{if } \; \NEe = \NEprofile \sharp \lambda \; , \; \e = \profile \sharp \lambda \; :
\quad  \int_{V} \J (a,\eta) \; \d (\e - \NEe) (a) \geq \int_{V} \J (a,\NEe) \; \d (\e - \NEe) (a) \geq 0.$$
\end{remark}
\begin{proof}
Since $\J$ is monotone we have $\int_{V} \left( \J (a,\e) - \J (a,\NEe) \right) \; \d (\e - \NEe) (a) \geq 0$ and so:
$$\int_{V} \J (a,\e) \; \d (\e - \NEe) (a) \geq
\int_{V} \J (a,\NEe) \; \d (\e - \NEe) (a) . $$
On the other hand
$$ \int_{V} \J (a,\NEe) \; \d (\e - \NEe) (a)
= \int_{I} \left( \J(\profile (i),\NEe) - \J(\NEprofile (i),\NEe) \right) \; \d \lambda (i) $$
by the definition of push-forward measures. Since $\NEprofile$ is an equilibrium, for \ae, we have $\J(\profile (i),\NEe) - \J(\NEprofile (i),\NEe) \geq 0$, which gives our result.
\end{proof}
The strict monotonicity yields the uniqueness of the Nash equilibrium in different frameworks, e.g. Haufbauer, Sandholm \cite{HS2009}, Blanchet, Carlier \cite{BC2014}, Lasry, Lions \cite{LL06cr2}. In the following we show that in Anonymous Games, the Monotonicity and Unique Minimiser condition are sufficient for the uniqueness of the equilibrium.
\begin{theorem}\label{NashUnique}
Consider a game $G=\GM$. Then the game $G$ admits at most one Nash equilibrium if $\J$ is monotone and $G$ satisfies the Unique Minimiser condition.

\end{theorem}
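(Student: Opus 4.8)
The plan is to take two Nash equilibria $\NEprofile_1, \NEprofile_2 \in \A$, with induced distributions $\NEe_1 = \NEprofile_1 \sharp \lambda$ and $\NEe_2 = \NEprofile_2 \sharp \lambda$, and to show that they coincide for \ae. The argument proceeds in three stages: first extract a scalar equality from the monotonicity inequality, then promote it to a pointwise statement about best responses, and finally invoke the Unique Minimiser condition to identify the two profiles. Note that I do \emph{not} aim to prove $\NEe_1 = \NEe_2$ first; the profiles will be identified directly, and equality of distributions follows a posteriori.

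First I would apply the second inequality of Remark \ref{Col1} to each equilibrium. Testing the equilibrium $\NEprofile_1$ against the profile $\NEprofile_2$ gives
$$X := \int_{V} \J (a,\NEe_1) \; \d (\NEe_2 - \NEe_1)(a) \geq 0,$$
and symmetrically, testing $\NEprofile_2$ against $\NEprofile_1$ gives
$$Y := \int_{V} \J (a,\NEe_2) \; \d (\NEe_1 - \NEe_2)(a) \geq 0.$$
All these integrals are finite thanks to the integrability clause in the definition of monotonicity. Adding them and using $\d(\NEe_1 - \NEe_2) = -\d(\NEe_2 - \NEe_1)$, one gets $X + Y = -\int_{V} \left( \J(a,\NEe_1) - \J(a,\NEe_2) \right) \d(\NEe_1 - \NEe_2)(a)$, which is $\leq 0$ by the monotonicity inequality. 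Since $X \geq 0$ and $Y \geq 0$, this forces $X = Y = 0$.

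The key step is to read $X = 0$ through the definition of the push-forward. Writing $X = \int_{I} \left( \J(\NEprofile_2(i),\NEe_1) - \J(\NEprofile_1(i),\NEe_1) \right) \d \lambda(i) = 0$, and recalling that $\NEprofile_1(i) \in \arg\min_{a\in A_i} \J(a,\NEe_1)$ for \ae\ while $\NEprofile_2(i) \in A_i$, the integrand is non-negative almost everywhere. A non-negative integrand with vanishing integral must vanish $\lambda$-a.e., so $\J(\NEprofile_2(i),\NEe_1) = \J(\NEprofile_1(i),\NEe_1) = {\rm Min}(i,\NEe_1)$ for \ae. In other words, $\NEprofile_2(i)$ is itself a minimiser of $\J(\cdot,\NEe_1)$ over $A_i$.

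Finally, since $\NEe_1 \in \mes_G(V) \subseteq \CV$, the Unique Minimiser condition supplies a set $I_{\NEe_1}$ of full measure on which $\J(\cdot,\NEe_1)$ has a unique minimiser over $A_i$. Intersecting with the full-measure sets from the previous paragraph, for \ae\ both $\NEprofile_1(i)$ and $\NEprofile_2(i)$ minimise $\J(\cdot,\NEe_1)$ over $A_i$ and hence must coincide; thus $\NEprofile_1 = \NEprofile_2$ in $\A$. I expect the main obstacle to be precisely the fact that monotonicity is \emph{not} assumed strict: the vanishing of the monotonicity gap gives only the scalar identity $X = 0$, not $\NEe_1 = \NEe_2$. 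The Unique Minimiser condition is exactly the ingredient that bridges this gap, converting $X = 0$ into the pointwise identification of the two profiles.
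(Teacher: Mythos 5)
Your proposal is correct and follows essentially the same route as the paper: both arguments combine the two equilibrium inequalities (your $X,Y\geq 0$ is the paper's pair of $\leq 0$ inequalities with the sign convention reversed), use monotonicity to force the sum to vanish, deduce that the integrand $\J(\NEprofile_2(i),\NEe_1)-\J(\NEprofile_1(i),\NEe_1)$ is a.e.\ zero, and conclude via the Unique Minimiser condition that the profiles coincide pointwise a.e. The only cosmetic difference is that you route the first step through Remark \ref{Col1} rather than restating the equilibrium property directly.
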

\begin{proof}
Let $\profile_1 , \profile_2 \in \mathcal{A}$ be two Nash equilibria. We will show that $\profile_1(i) = \profile_2 (i)$ for \ae . Set $\eta_i = \profile_i \sharp \lambda$ for $i=1,2$. Since $\profile_1$ is an equilibrium, we have:
$$
\int_{I} \left( \J(\profile_1 (i),\eta_1) - \J(\profile_2 (i),\eta_1) \right) \; \d \lambda (i) \leq 0,
$$
since $\J (\profile_1 (i),\eta_1) \leq \J (\profile_2 (i),\eta_1)$ for $\lambda$-almost every $i\in I$. On the other hand:
$$
\int_{I} \left( \J (\profile_1 (i),\eta_1) - \J (\profile_2 (i),\eta_1) \right) \; \d \lambda (i) =
\int_{V} \J (a,\eta_1) \; \d (\eta_1 - \eta_2) (a),
$$
from the definition since $\profile_i \sharp \lambda = \eta_i$ for $i=1,2$. So $$\int_{V} \J(a,\eta_1) \; \d (\eta_1 - \eta_2) (a) \leq 0 \quad  \text{and (similarly)} \quad \int_{V} \J (a,\eta_2) \; \d (\eta_2 - \eta_1) (a) \leq 0,$$ which gives:
$$\int_{V} \left( \J(a,\eta_1) - \J (a,\eta_2) \right)  \; \d (\eta_1 - \eta_2) (a) \leq 0.$$
Hence by monotonicity of $\J $ we should have the equality in the later inequalities. So for \ae,  one has $\J (\profile_1 (i),\eta_1) = \J (\profile_2 (i),\eta_1)$ which gives our result since $\profile_1(i) \in A_i$ is the unique minimisers of $\J (\cdot , \eta_1)$ on $A_i$ so $\profile_1(i) = \profile_2(i)$ for \ae.
\end{proof}

\begin{remark}
One can similarly show that if $\J$ is strictly monotone and not necessarily satisfies the unique minimizer condition, then there exists at most one Nash equilibrium distribution.
\end{remark}

\section{Fictitious Play in Anonymous Games}\label{section:Fictitious Play in Anonymous Games}
Here we introduce a learning procedure similar to the Fictitious Play defined by Brown \cite{BrownFictitiousPlay} and prove its convergence to the unique Nash equilibrium when the game is monotone. 

Let $G=\GM$. For technical reasons, we suppose that conditions (\H) hold throughout this section. Suppose $G$ is being played repeatedly on discrete rounds $n=1,2,\ldots$. At  every round, the players set their belief equals to the average of the action distribution observed in the previous rounds and then react their best to such belief. At the end of the round players revise their beliefs by a new observation. More formally, consider $\profile_1 \in \A, \; \bar{\e}_1 = \e_1 = \profile_1 \sharp \lambda \in \mes(V)$ an arbitrary initial belief. Construct recursively $(\profile_n , \e_n , \bar{\e}_n) \in \A \times \mes(V) \times \mes(V)$  for $n=1,2,\ldots$ as follows:
\begin{equation}\label{FicitiousPlay}
\begin{array}{lrll}
(i)&\profile_{n+1}(i) &= &BR(i,\bar{\e}_{n}), \quad \text{for \ae}, \\
(ii)&\e_{n+1} &= &\profile_{n+1} \sharp \lambda ,\\
(iii)&\bar{\e}_{n+1} &= & \frac{n}{n+1} \bar{\e}_{n} + \frac{1}{n+1} \e_{n+1}.
\end{array}
\end{equation}
One should notice that by assumption $(\H)(v)$ and Lemma \ref{LEMMA:ContBR} the expressions in $(i,ii)$ are well defined. We will show now that this procedure converges to the Nash Equilibrium when $G$ is monotone.
\begin{theorem}\label{FictitiousPlayMonotone}
Consider an Anonymous game $G=(I, \lambda ,V,(A_i)_{i\in I} , \J )$ with a \textbf{monotone} Cost.
Suppose that exists $C>0$ such that for all $a,b \in V , \; \e,\e' \in \CV$:
\begin{equation}\label{LPCONDFP}
\begin{split}
\abs \J(a,\e) - \J(a,\e') - \J(b,\e) + \J(b,\e') \abs &\leq C \; \d_V(a,b) \; \d_1 (\e , \e'), \\
\abs \J(a,\e) - \J(a,\e') \abs &\leq C \; \d_1 (\e , \e').
\end{split}
\end{equation}
Construct $(\profile_n , \e_n , \bar{\e}_n) \in \A \times \mes(V) \times \mes(V)$ for $n \in \N$ by applying the fictitious play procedure proposed in \eqref{FicitiousPlay}. 
Then:
$$\e_n , \bar{\e}_n \xrightarrow{\d_1} \NEe$$
where $\NEe \in \mes_G (V)$ is the unique Nash equilibrium distribution.
\end{theorem}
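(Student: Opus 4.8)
The plan is to measure how far the running belief is from equilibrium by its \emph{exploitability},
\[
\phi_n \;=\; \int_V \J(a,\bar\e_n)\,\d\bar\e_n(a)\;-\;\min_{\mu\in\CV}\int_V \J(a,\bar\e_n)\,\d\mu(a).
\]
Since $BR(\cdot,\bar\e_n)$ minimises $\J(\cdot,\bar\e_n)$ player by player, the minimum is attained at $\e_{n+1}=\Theta(\bar\e_n)$, so $\phi_n=\int_V \J(a,\bar\e_n)\,\d(\bar\e_n-\e_{n+1})(a)\ge 0$, and $\phi_n=0$ can happen only at a Nash distribution by the unique--minimiser condition. The target is $\phi_n\to 0$. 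The guiding heuristic is that fictitious play is the Euler scheme with step $\tfrac1{n+1}$ of the best--response flow $\dot{\bar\e}=\Theta(\bar\e)-\bar\e$, along which monotonicity gives $\tfrac{\d}{\d t}\phi\le-\phi$; I will reproduce this as a discrete Gronwall inequality.

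First I would establish a one--step inequality. Using $\bar\e_{n+1}-\bar\e_n=\tfrac1{n+1}(\e_{n+1}-\bar\e_n)$ together with the optimality of $\e_{n+1}$ and $\e_{n+2}$ as best responses to $\bar\e_n$ and $\bar\e_{n+1}$, a short computation (adding and subtracting $\J(a,\bar\e_n)$ in the two competing minima) gives
\[
\phi_{n+1}-\phi_n\;\le\; \int_V\big(\J(a,\bar\e_{n+1})-\J(a,\bar\e_n)\big)\,\d(\bar\e_{n+1}-\e_{n+2})(a)\;-\;\tfrac1{n+1}\,\phi_n,
\]
where $-\tfrac1{n+1}\phi_n$ is the discrete contraction. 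For the integral I would split $\bar\e_{n+1}-\e_{n+2}=(\bar\e_n-\e_{n+1})+(\bar\e_{n+1}-\bar\e_n)-(\e_{n+2}-\e_{n+1})$. Because $\bar\e_n-\e_{n+1}=-(n+1)(\bar\e_{n+1}-\bar\e_n)$, the first piece equals a negative multiple of the monotonicity bracket $\int(\J(a,\bar\e_{n+1})-\J(a,\bar\e_n))\,\d(\bar\e_{n+1}-\bar\e_n)(a)\ge 0$ and is therefore $\le 0$; this is precisely the cancellation that removes the dangerous $O(\tfrac1{n+1})$ error, mirroring the $-\langle DJ\,h,h\rangle\le 0$ term of the continuous argument.

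The remaining two pieces are handled by \eqref{LPCONDFP}: its first estimate says $a\mapsto \J(a,\bar\e_{n+1})-\J(a,\bar\e_n)$ is Lipschitz with constant $C\,\d_1(\bar\e_{n+1},\bar\e_n)$, so against the mass--zero measures $\bar\e_{n+1}-\bar\e_n$ and $\e_{n+2}-\e_{n+1}$ they are bounded by $C\,\d_1(\bar\e_{n+1},\bar\e_n)^2=O(\tfrac1{(n+1)^2})$ and by $C\,\d_1(\bar\e_{n+1},\bar\e_n)\,\d_1(\e_{n+2},\e_{n+1})$. Here lies the main obstacle: the jump $\d_1(\e_{n+2},\e_{n+1})=\d_1(\Theta(\bar\e_{n+1}),\Theta(\bar\e_n))$ of consecutive best--response distributions need not be $O(\tfrac1{n+1})$, since best responses are not Lipschitz in the belief. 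The way out is that $\Theta$ is continuous on the compact set $\CV$ (Lemma \ref{LEMMA:ContTheta} with (\H)(iv)), hence uniformly continuous, while $\d_1(\bar\e_{n+1},\bar\e_n)=\tfrac1{n+1}\d_1(\e_{n+1},\bar\e_n)\to 0$; therefore $\d_1(\e_{n+2},\e_{n+1})\to 0$. Multiplying the one--step inequality by $(n+1)$ and telescoping then yields
\[
N\phi_N\;\le\;\phi_1+O(\log N)+C\sum_{n<N}\d_1(\e_{n+2},\e_{n+1}),
\]
and dividing by $N$, the Ces\`aro average of the null sequence $\d_1(\e_{n+2},\e_{n+1})$ vanishes, so $\phi_N\to 0$.

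Finally I would upgrade $\phi_n\to 0$ to convergence. By compactness of $\CV$ any subsequence of $(\bar\e_n)$ has a $\d_1$--limit point $\e^*$, and continuity of $\phi$ (from (\H) and \eqref{LPCONDFP}) gives $\phi(\e^*)=0$; a sandwich using $\phi_n\to 0$ and Remark \ref{Col1} shows that both $\e^*$ and the unique Nash distribution $\NEe\in\mes_G(V)$ minimise $\mu\mapsto\int_V\J(a,\e^*)\,\d\mu(a)$. Since $\NEe=\tilde\Psi\sharp\lambda$ lies in $\mes_G(V)$, the unique--minimiser condition forces the minimising profile to equal $BR(\cdot,\e^*)$ almost everywhere, i.e. $\Theta(\e^*)=\NEe$ (uniqueness of the equilibrium being Theorem \ref{NashUnique}). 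A subsequence argument using continuity of $\Theta$ then gives $\e_{n+1}=\Theta(\bar\e_n)\xrightarrow{\d_1}\NEe$, hence $\e_n\xrightarrow{\d_1}\NEe$; and since $\bar\e_n=\tfrac1n\sum_{k=1}^n\e_k$ is the Ces\`aro average of $(\e_k)$ in the normed structure underlying $\d_1$, we also get $\bar\e_n\xrightarrow{\d_1}\NEe$, which is the claim.
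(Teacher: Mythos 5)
Your proposal is correct and follows essentially the same route as the paper's proof: the same Lyapunov quantity $\phi_n=\int_V \J(a,\bar\e_n)\,\d(\bar\e_n-\e_{n+1})(a)$, the same one-step inequality in which the monotonicity bracket against $\bar\e_{n+1}-\bar\e_n$ cancels the dangerous $O(1/(n+1))$ term, the same appeal to uniform continuity of $\Theta$ on the compact set $\CV$ to get $\d_1(\e_{n+1},\e_{n+2})\to 0$, the same telescoping of $n\phi_n$ (the paper's Lemma \ref{LEMFICT}), and the same identification of limit points via disintegration and the unique-minimiser condition. The only point to tighten is that $\J$ is merely lower semicontinuous under (\H), so your ``continuity of $\phi$'' should be weakened to lower semicontinuity, which together with $\phi\ge 0$ and $\phi_n\to 0$ still forces $\phi(\e^*)=0$ --- this is exactly the $\liminf$ argument the paper runs against a fixed $\theta\in\mes_G(V)$.
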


Inspired from \cite{HS2009}, the proof requires several steps. The key idea is to use the quantity $\phi_n \in \R $ defined by
$$ \phi_n =\int_V \J (a,\bar{\e}_n) \; \d (\bar{\e}_n - \e_{n+1})(a) , \quad \text{for every } n \in \N.$$
Since the best response distribution of $\bar{\e}_n$ is $\e_{n+1}$, the quantity $\phi_n$ describes how much $\bar{\e}_n$ is far from being an equilibrium. By using monotonicity and the regularity conditions, one gets
$$\forall n\in \N: \quad \phi_{n+1} - \phi_n \leq -\frac{1}{n+1} \phi_n + \frac{\epsilon_n}{n},$$
for suitable $\{\epsilon_n \}_{n\in \N}$ such that $\lim_{n\to \infty} \epsilon_n = 0$. We show the later inequality is sufficient to prove $\lim_{n\to \infty} \phi_n = 0$ and then we conclude that the accumulation points of $\bar{\e}_n , \e_n$ is the equilibrium distribution $\NEe$. As one will see, the unique minimiser assumption plays a key role in Lemma \ref{LEMFICT2} and hence in our main result.

\begin{lemma}\label{LEMFICT}
Consider a sequence of real numbers $\{ \phi_n \}_{n \in \N}$ such that $\liminf_n \phi_n \geq 0$. If there exists a real sequence $\{ \epsilon_n \}_{n\in \N}$ such that $\lim_{n \to \infty} \epsilon_n = 0$ and :
$$\forall \; n\in \N: \quad \phi_{n+1} - \phi_n \leq -\frac{1}{n+1} \phi_n + \frac{\epsilon_n}{n},$$
then $\lim_{n \to \infty} \phi_n = 0$.
\end{lemma}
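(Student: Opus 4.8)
The plan is to linearize the recurrence by clearing the factor $\tfrac{1}{n+1}$. Rewriting the hypothesis as $\phi_{n+1} \leq \tfrac{n}{n+1}\phi_n + \tfrac{\epsilon_n}{n}$ and multiplying through by $n+1$, I would introduce the auxiliary sequence $\psi_n := n\,\phi_n$, which obeys the much simpler additive inequality
$$\psi_{n+1} - \psi_n \leq \frac{n+1}{n}\,\epsilon_n.$$
This converts the multiplicative recurrence into one that telescopes cleanly.

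Next I would sum this inequality from a fixed index $N$ up to $M-1$, obtaining $\psi_M \leq \psi_N + \sum_{n=N}^{M-1}\tfrac{n+1}{n}\epsilon_n$, and hence
$$\phi_M \leq \frac{\psi_N}{M} + \frac{1}{M}\sum_{n=N}^{M-1}\frac{n+1}{n}\,\epsilon_n.$$
The argument from here is purely of Ces\`aro type. Fix $\delta>0$ and choose $N$ so large that $\abs{\epsilon_n}\leq \delta$ for all $n\geq N$. Splitting $\tfrac{n+1}{n}=1+\tfrac1n$, the tail sum is bounded above by $\delta(M-N) + \delta\sum_{n=N}^{M-1}\tfrac1n$. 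Dividing by $M$ and letting $M\to\infty$, the first piece tends to $\delta$, while the harmonic piece is $O(\log M)$ and therefore vanishes under the normalization $1/M$, as does $\psi_N/M$ for the fixed $N$. This yields $\limsup_{M\to\infty}\phi_M \leq \delta$, and since $\delta$ was arbitrary, $\limsup_{M}\phi_M \leq 0$.

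Finally, combining this bound with the standing hypothesis $\liminf_n \phi_n \geq 0$ gives the squeeze $0 \leq \liminf_n \phi_n \leq \limsup_n \phi_n \leq 0$, so both coincide and $\lim_{n\to\infty}\phi_n = 0$. I expect the only delicate point to be the bookkeeping in this averaging step: one must hold $N$ fixed while $M\to\infty$ so that $\psi_N/M\to 0$ and the harmonic sum is negligible, thereby isolating the one genuinely dangerous contribution $\tfrac1M\sum\epsilon_n \leq \delta$. It is also worth emphasizing that the hypothesis $\liminf_n \phi_n \geq 0$ is indispensable rather than cosmetic: the recurrence controls $\phi_n$ only from above (it forbids large positive values but permits drift toward $-\infty$), so convergence to $0$ is forced precisely by the two-sided pinch between the recurrence and the $\liminf$ assumption.
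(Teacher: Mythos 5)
Your proof is correct and follows essentially the same route as the paper: both introduce $n\phi_n$, observe that the recurrence telescopes into $n\phi_n \leq \text{const} + \sum \tfrac{n+1}{n}\epsilon_n$, and conclude by a Ces\`aro average combined with the hypothesis $\liminf_n \phi_n \geq 0$. The only difference is that you spell out the $\delta$-splitting of the Ces\`aro argument that the paper leaves implicit.
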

\begin{proof}
Let $b_n = n \phi_n$ for every $n \in \N$. We have:
$$\forall \; n\in \N: \quad \frac{b_{n+1}}{n+1} - \frac{b_{n}}{n} \leq - \frac{b_{n}}{n(n+1)} + \frac{\epsilon_n}{n},$$
which implies $b_{n+1} \leq b_n + (n+1) \epsilon_n /n \leq b_n + 2 \abs \epsilon_n \abs$. Then we get $b_n \leq b_1 + 2\sum_{i=1}^{n-1} \abs \epsilon_i \abs $ for $n\in \N$ and so:
$$0 \leq \liminf_n \phi_n \leq \limsup_n \phi_n \leq \limsup_n \frac{b_1 + 2 \sum_{i=1}^{n-1} \abs \epsilon_i \abs }{n} = 0.$$
which proves $\lim_{n \to \infty} \phi_n = 0$.
\end{proof}
\begin{lemma}\label{LEMFICT2}
Let $(\e_n)_{n \in \N}$ be defined by \eqref{FicitiousPlay}. Then
$$\d_1 (\bar{\e}_{n} , \bar{\e}_{n+1}) = O(1/n), \quad \lim_{n\to \infty} \d_1 (\e_n , \e_{n+1}) = 0.$$
\end{lemma}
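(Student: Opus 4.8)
The plan is to prove the two claims separately, starting from the explicit averaging recursion $(iii)$ in \eqref{FicitiousPlay}.

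For the first estimate, I would simply unpack the update rule. From $\bar{\e}_{n+1} = \frac{n}{n+1} \bar{\e}_n + \frac{1}{n+1} \e_{n+1}$ one gets the difference $\bar{\e}_{n+1} - \bar{\e}_n = \frac{1}{n+1}(\e_{n+1} - \bar{\e}_n)$. Since $\d_1$ is a metric coming from a supremum over $1$-Lipschitz test functions, it is (positively) homogeneous under this scalar-convex-combination manipulation, so $\d_1(\bar{\e}_n , \bar{\e}_{n+1}) = \frac{1}{n+1}\, \d_1(\e_{n+1}, \bar{\e}_n)$. It remains to bound $\d_1(\e_{n+1}, \bar{\e}_n)$ by a constant uniformly in $n$; this is where the boundedness condition \eqref{BoundednessCondition} enters: all measures in play are supported (up to $\lambda$-null sets) in the ball of radius $M$ around $v$, so by the dual (Kantorovich--Rubinstein) formula any two such measures are at $\d_1$-distance at most $2M$. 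Hence $\d_1(\bar{\e}_n, \bar{\e}_{n+1}) \leq \frac{2M}{n+1} = O(1/n)$.

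The second estimate, $\lim_{n\to\infty}\d_1(\e_n, \e_{n+1}) = 0$, is the genuine content and the main obstacle, since the $\e_n$ are best-response distributions and there is no averaging to smooth them directly. The idea is to compare $\e_n$ and $\e_{n+1}$ through the beliefs that generated them: by $(i)$--$(ii)$ we have $\e_{n+1} = BR(\cdot, \bar{\e}_n)\sharp\lambda = \Theta(\bar{\e}_n)$ and $\e_n = \Theta(\bar{\e}_{n-1})$. So $\d_1(\e_n,\e_{n+1}) = \d_1(\Theta(\bar{\e}_{n-1}), \Theta(\bar{\e}_n))$, and I would like to leverage continuity of $\Theta$ (Lemma \ref{LEMMA:ContTheta}) together with the first estimate, which forces $\d_1(\bar{\e}_{n-1}, \bar{\e}_n) \to 0$. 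The subtlety is that plain continuity of $\Theta$ does not immediately turn a vanishing input gap into a vanishing output gap along a sequence whose arguments need not converge. The clean way around this is to invoke compactness: by $(\H)(iv)$, $\CV$ is compact, so $\Theta$ is in fact \emph{uniformly} continuous on $\CV$; applying uniform continuity to the pair $\bar{\e}_{n-1}, \bar{\e}_n$ whose distance tends to zero yields $\d_1(\Theta(\bar{\e}_{n-1}), \Theta(\bar{\e}_n)) \to 0$, which is exactly the claim.

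Alternatively, and perhaps more in the spirit of the pointwise estimates used elsewhere, one can argue directly at the level of players: for almost every $i$, $\e_n$ and $\e_{n+1}$ assign player $i$ the actions $BR(i,\bar{\e}_{n-1})$ and $BR(i,\bar{\e}_n)$, and the best-response map depends continuously on the belief (as established in the proof of Lemma \ref{LEMMA:ContBR}); combined with $\d_1(\bar{\e}_{n-1},\bar{\e}_n)\to 0$ and dominated convergence as in Lemma \ref{LEMMA:ContTheta}, this gives $\int_I \d_V(BR(i,\bar{\e}_{n-1}), BR(i,\bar{\e}_n))\,\d\lambda(i) \to 0$, and then the chain of inequalities used in Lemma \ref{LEMMA:ContTheta} bounds $\d_1(\e_n,\e_{n+1})$ by this integral. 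I expect the uniform-continuity route to be the shortest to write; the only place demanding care is ensuring the exceptional $\lambda$-null sets from the unique-minimiser condition are handled uniformly across $n$ by intersecting the relevant $I_{\bar{\e}_k}$, exactly as in the proof of Lemma \ref{LEMMA:ContTheta}.
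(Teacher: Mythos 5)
Your proposal is correct and follows essentially the same route as the paper: the first estimate comes from unpacking the averaging recursion and bounding $\d_1(\e_{n+1},\bar{\e}_n)$ by $2M$ via the boundedness condition \eqref{BoundednessCondition}, and the second comes from writing $\e_{n+1}=\Theta(\bar{\e}_n)$ and invoking uniform continuity of $\Theta$ on the compact set $\CV$ (the paper phrases this through a modulus of continuity $\omega$, which is the same device). Your identification of the key subtlety — that plain continuity of $\Theta$ along a possibly non-convergent sequence is not enough and compactness must be used to upgrade it — is precisely the point of the paper's argument.
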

\begin{proof}
Let $M>0, v\in V$ be chosen from \eqref{BoundednessCondition}. For every 1-Lipschitz continuous map $f:V \to \R$ we have:
$$\left| \int_V f(a) \; \d (\bar{\e}_{n+1} - \bar{\e}_{n}) \right| = \frac{1}{n+1} \left| \int_V f(a) \; \d (\e_{n+1} - \bar{\e}_{n})(a) \right| =$$ $$ \frac{1}{n+1} \left| \int_V (f(a) - f(v)) \; \d (\e_{n+1} - \bar{\e}_{n})(a) \right| \leq \frac{1}{n+1}  \left( \int_V \d_V (a,v) \; \d \e_{n+1}(a) + \frac{1}{n} \sum_{k=1}^{n} \int_V \d_V (a,v) \; \d \e_{k} (a)  \right).$$
By the definition we have:
$$\int_V \d_V (a,v) \; \d \e_{k} (a) = \int_V  \d_V (\profile_k(i),v)  \; \d \lambda (i) \leq M, \quad \text{for every }k\in \N.$$
So we can write 
$$\left| \int_V f(a) \; \d (\bar{\e}_{n+1} - \bar{\e}_{n}) \right| \leq \frac{2M}{n+1},$$
or $\d_1 (\bar{\e}_{n} , \bar{\e}_{n+1}) \leq \frac{2M}{n+1}$ since $f$ is arbitrary.

For the second part of the lemma, let us consider the best reply distribution function $\Theta$ defined in Lemma \ref{LEMMA:ContTheta}. Since $\Theta$ is continuous (Lemma \ref{LEMMA:ContTheta}) and $\overline{\cov(\mes_G (V))}$ is compact, there exists a non decreasing continuity modulus
$$\omega : \R_+ \to \R_+ , \quad \lim_{x \to 0^+} \omega(x) = 0$$
such that:
$$\forall \; \e_1,\e_2 \in \overline{\cov(\mes_G (V))} : \quad \d_1 (\Theta(\e_1) , \Theta(\e_2)) \leq \omega (\d_1 (\e_1 , \e_2)).$$
Since for all $n \in \N$ we have $\bar{\e}_{n} \in \overline{\cov(\mes_G (V))}$ and $\Theta(\bar{\e}_n) = \e_{n+1}$ we have
$$  0 \leq \d_1 (\e_{n+1} , \e_{n+2}) = \d_1 (\Theta(\bar{\e}_{n}) , \Theta(\bar{\e}_{n+1})) \leq \omega ( \d_1 (\bar{\e}_{n} , \bar{\e}_{n+1})).$$
It gives our desired result since $\d_1 (\bar{\e}_{n} , \bar{\e}_{n+1}) = O(1/n)$.
\end{proof}
The proof of previous lemma relies heavily on the unique minimizer assumption. Instead without it, one cannot conclude that $\e_n , \e_{n+1}$ are close even if $\bar{\e}_n , \bar{\e}_{n+1}$ are so. Even for $\bar{\e}_n = \bar{\e}_{n+1}$, one might have very different best responses $\e_n $ and $ \e_{n+1}$.
\begin{proof}[Proof of Theorem \ref{FictitiousPlayMonotone}]
Let $ \{\phi_n \}_{n\in \N}$ be defined by:
$$ \phi_n =\int_V \J (a,\bar{\e}_n) \; \d (\bar{\e}_n - \e_{n+1})(a) , \quad \text{for every } n \in \N.$$
We have $\phi_n \geq 0$ for all $n\in \N$. Indeed, rewriting the definition of $\phi_n$, we have:
$$ \phi_n =\int_I \frac{1}{n}\sum_{j=1}^{n} \left( \J (\profile_{j}(i),\bar{\e}_n) - \J (BR(i,\bar{\e}_n),\bar{\e}_n) \right) \; \d \lambda (i),$$
and the positiveness comes from the definition of the best response. We now prove that exists $C>0$ such that:
\begin{equation}\label{InEq:PhiFict}
\phi_{n+1} - \phi_{n} \leq - \frac{1}{n+1} \phi_{n} + C\frac{\d_1 (\e_{n} , \e_{n+1}) + 1/n}{n}, \quad \text{for every} \; n \in \N.
\end{equation}
Let us rewrite $\phi_{n+1} - \phi_{n} = A + B$, where:
$$A= \int_V \J (a,\bar{\e}_{n+1}) \; \d \bar{\e}_{n+1} (a) - \int_V \J (a,\bar{\e}_n) \; \d \bar{\e}_n (a), $$ 
$$
B= \int_{V}  \J(a,\bar{\e}_{n}) \; \d \e_{n+1}(a) - \int_{V}  \J(a,\bar{\e}_{n+1}) \; \d \e_{n+2}(a).$$
We have:
\begin{equation*}
\begin{split}
B &\leq \int_{V}  \J(a,\bar{\e}_{n}) \; \d \e_{n+2}(a) - \int_{V}  \J(a,\bar{\e}_{n+1}) \; \d \e_{n+2}(a) \\
&= \int_{V} ( \J(a,\bar{\e}_{n}) - \J(a,\bar{\e}_{n+1}) ) \; \d \e_{n+2}(a) \\
&\leq \int_{V} ( \J(a,\bar{\e}_{n}) - \J(a,\bar{\e}_{n+1}) ) \; \d \e_{n+1}(a) + \frac{C}{n} \d_1 (\e_{n+1} , \e_{n+2}),
\end{split}
\end{equation*}
since by $\eqref{LPCONDFP}$ and Lemma \ref{LEMFICT2} there exists $C$ such that the function $\J(\cdot ,\bar{\e}_{n}) - \J( \cdot ,\bar{\e}_{n+1}) : V \to \R$ is a $C/n-$Lipschitz continuous function. Let us rewrite the expression $A$ as follows:
\begin{equation*}
\begin{split}
A &= 
\int_V \J (a,\bar{\e}_{n+1}) \; \d (\bar{\e}_{n} + \frac{1}{n+1}(\e_{n+1} - \bar{\e}_n) ) (a) - \int_V \J (a,\bar{\e}_n) \; \d \bar{\e}_n (a) \\
&=
\int_V (\J (a,\bar{\e}_{n+1}) - \J (a,\bar{\e}_n) ) \; \d \bar{\e}_{n} (a) + \frac{1}{n+1} \int_V \J (a,\bar{\e}_{n+1}) \; \d (\e_{n+1} - \bar{\e}_n) ) (a) \\
&\leq
\int_V (\J (a,\bar{\e}_{n+1}) - \J (a,\bar{\e}_n) ) \; \d \bar{\e}_{n} (a) + \frac{1}{n+1} \int_V \J (a,\bar{\e}_{n}) \; \d (\e_{n+1} - \bar{\e}_n) ) (a) + \frac{C}{n^2}
\end{split}
\end{equation*}
since by \eqref{LPCONDFP} and Lemma  \ref{LEMFICT2} we have  $\abs \J(a,\bar{\e}_n) - \J(a,\bar{\e}_{n+1}) \abs \leq C \; \d_1 (\bar{\e}_{n+1} , \bar{\e}_n) = O(1/n)$. So
$$
A \leq \int_V (\J (a,\bar{\e}_{n+1}) - \J (a,\bar{\e}_n) ) \; \d \bar{\e}_{n} (a) - \frac{\phi_n}{n+1} + \frac{C}{n^2}.$$
Then if we set $\epsilon_n = C (\d_1 (\e_{n+1} , \e_{n+2}) + 1/n )$, by using the above inequalities for $A,B$, we have :
\begin{equation}
\begin{split}
A+ B &\leq \int_V (\J (a,\bar{\e}_{n+1}) - \J (a,\bar{\e}_n) ) \; \d (\bar{\e}_{n} - \e_{n+1}) (a) - \frac{\phi_n}{n+1} + \frac{\epsilon_n}{n} \\
&= -(n+1) \int_V (\J (a,\bar{\e}_{n+1}) - \J (a,\bar{\e}_n) ) \; \d (\bar{\e}_{n+1} - \bar{\e}_{n}) (a) - \frac{\phi_n}{n+1} + \frac{\epsilon_n}{n} \\
& \leq - \frac{\phi_n}{n+1} + \frac{\epsilon_n}{n},
\end{split}	
\end{equation}
and the last inequality comes from the monotonicity assumption. By Lemmas \ref{LEMFICT}, \ref{LEMFICT2} inequality \eqref{InEq:PhiFict} implies $\phi_{n} \to 0$. Let $(\e , \bar{\e}) \in \overline{\mes_G(V)} \times \overline{\cov(\mes_G (V))} $ be an accumulation point of the set $\{ (\e_{n+1} , \bar{\e}_n ) \}_{n\in \N}$. We have $\e = \Theta(\bar{\e})$ due to the continuity of best response distribution function $\Theta$ (Lemma \ref{LEMMA:ContTheta}) and the fact that $\e_{n+1} = \Theta(\bar{\e}_n)$.

Take an arbitrary $\theta \in \mes_G(V)$. Since $\J$ is lower semi-continuous we have (see \cite{AGS} section 5.1.1):
$$\int_V \J (a,\bar{\e}) \; \d ( \bar{\e} - \theta ) (a) \leq \liminf \int_V \J (a,\bar{\e}) \; \d ( \bar{\e}_n - \theta ) (a)  = 
\liminf \int_V \J (a,\bar{\e}_n) \; \d ( \bar{\e}_n - \theta ) (a)
$$ 
$$=\lim \inf \int_V \J (a,\bar{\e}_n) \; \d ( \e_{n+1} - \theta )(a) + \phi_n \leq \lim \inf \phi_n =0 $$
since $\e_{n+1} = \Theta(\bar{\e}_n)$ and $\int_V \J (a,\bar{\e}_n) \; \d ( \e_{n+1} - \theta )(a) \leq 0$ for every $\theta \in \mes_G (V)$. So:
\begin{equation}\label{123}
	\forall \; \theta \in \mes_G (V): \quad \int_V \J (a,\bar{\e}) \; \d ( \bar{\e} - \theta ) (a) \leq 0.
\end{equation}
We rewrite the above inequality as follows: since $\bar{\e} \in \CV$ by Corollary \ref{COR:DisintCOV} we can disintegrate it with respect to $(A_i)_{i\in I}$ i.e. there are $\{ \bar{\e}^i \}_{i \in I} \subseteq \mes(V)$ such that for $\lambda-$almost every $i\in I$ we have $\text{supp}(\bar{\e}^i) \subset A_i$ and for every integrable function $f:V \to \R$:
$$ \int_I \int_{A_i} f(a) \; \d ( \bar{\e}^i)(a) \; \d \lambda(i) = \int_V f(a) \; \d \bar{\e} (a).$$
Specially for $f = \J (\cdot , \e)$ we have $\int_I \int_{A_i} \J (a,\bar{\e}) \; \d ( \bar{\e}^i)(a) \; \d \lambda(i) = \int_V \J (a,\bar{\e}) \; \d \bar{\e} (a),$
and for all $\profile \in \A$:
$$\int_I \int_{A_i}  J(\profile(i),\bar{\e}) \; \d ( \bar{\e}^i)(a) \; \d \lambda(i) = \int_V \J (a,\bar{\e}) \; \d ( \profile\sharp \lambda) (a).$$
Combining the previouse equailities with \eqref{123}, gives us:
$$\forall \profile \in \A: \quad \int_I \int_{A_i} ( \J (a,\bar{\e}) - J(\profile(i),\bar{\e}) ) \; \d ( \bar{\e}^i)(a) \; \d \lambda(i) = \int_V \J (a,\bar{\e}) \; \d ( \bar{\e} - \profile\sharp \lambda) (a) \leq 0.$$
In particular if $\profile = BR (\cdot , \bar{\e})$ we have:
$$ \int_I \int_{A_i} ( \J (a,\bar{\e}) - J(BR(i , \bar{\e}),\bar{\e}) ) \; \d ( \bar{\e}^i)(a) \; \d \lambda(i) \leq 0,$$
which gives the equality by definition of best response action. So by unique minimizer we have $\bar{\e}^i = \delta_{BR(i , \bar{\e})}$ for $\lambda-$almost every $i\in I$. It means $\bar{\e} = BR( \cdot , \bar{\e}) \sharp \lambda$ or $\bar{\e} = \Theta(\bar{\e})$. Hence $\bar{\e} = \e$ and they are both equal to $\NEe \in \mes_G (V)$, the unique fixed point of $\Theta$, or equivalently, the equilibrium distribution. 
\end{proof}

\section{Online Mirror Descent}

Here we investigate the convergence results by applying Online Mirror Descent (OMD) in anonymous games. The form of OMD algorithm is closely related to the online optimization and no regret notions. The reader can find a good explanatory note in \cite{Shwartz2011}. The goal of the algorithm is to act optimally in online manner by somehow "minimising" a function that itself changes at each step. In the game framework it is the cost function; it changes due to change of the acions chosen by adverseries in each round. As one can notice in the following, we need the structure of vector space for the action sets.

\subsection{Prelimineries}
Before we propose the main OMD, let us review some definitions and lemmas. 

\begin{definition}
Let $(W, \norm \cdot \norm_W)$ be a normed vector space. For $K>0$ we say that $h : W \to \R$ is a $K-$strongly convex function if 
$$\forall a_1,a_2 \in W, \; \forall \lambda \in [0,1] : \quad h(\lambda a_1 + (1-\lambda) a_2) \leq \lambda h(a_1) + (1-\lambda) h(a_2) - K\lambda(1-\lambda) \norm a_1-a_2 \norm_W ^2.$$
\end{definition}
\begin{definition}
The Fenchel conjugate of a function $h : W \to \R$ on a set $A \subseteq W $ is defined by:
$$ h^*_A : W^* \to \R: \quad h^*_A(y) = \sup_{a\in A} \; \langle y, a \rangle - h(a), \quad \text{for all } y\in W^*$$
and the related maximiser correspondence by:
$$ Q_A : W^* \to A: \quad Q_A(y) = \arg \max_{a\in A} \; \langle y, a \rangle - h(a), \quad \text{for all } y\in W^*.$$
\end{definition}
One can notice that $Q_A$ is not empty if $A$ is weakly closed and $h$ is weakly lower semi-continuous and coercive, i.e.
$$\lim_{a \to \infty} h(a)/\norm a \norm_W = +\infty.$$
If $W$ be a Hilbert space (so $W^* = W$) and $h(a)=\frac{1}{2} \norm a \norm_W^2$ then the correspondence $Q_A$ will be the classical projection on $A$:
$$Q_A(y) = \arg \max_{a\in A} \; \langle y, a \rangle_W - \frac{1}{2} \norm a \norm_W^2 = \arg \max_{a\in A} -\norm y - a \norm_W^2 = \pi_A(y).$$
\begin{lemma}\label{QLipschitz}
Let $h:W \to \R$ be a $K-$strongly convex function and $A$ a convex subset of $W$. For any $y_1,y_2 \in W^*$ let $a_i \in Q_A (y_i), \; i=1,2$. Then we have: $$2K \norm a_1 - a_2 \norm^2 _W \leq \langle y_1 - y_2 , a_1 - a_2 \rangle. $$
It implies $\norm a_1 - a_2 \norm_W \leq \frac{1}{2K} \norm y_1 - y_2 \norm_{W^*}$. In particular if $y_1=y_2$ then $a_1 = a_2$ i.e. the correspondence $Q_A (y)$ is either empty or single valued for every $y \in W^*$.
\end{lemma}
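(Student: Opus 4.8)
The plan is to transfer the $K$-strong convexity of $h$ to the functionals being maximised and then combine the resulting optimality estimates obtained for $y_1$ and $y_2$. For a fixed $y \in W^*$ write $g_y(a) = \langle y, a \rangle - h(a)$. Since $\langle y , \cdot \rangle$ is affine, the defining inequality of $K$-strong convexity of $h$ immediately gives that $g_y$ is $K$-strongly concave, namely $g_y(\lambda b + (1-\lambda) c) \geq \lambda g_y(b) + (1-\lambda) g_y(c) + K \lambda (1-\lambda) \norm b - c \norm_W^2$ for all $b,c \in W$ and $\lambda \in [0,1]$.

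First I would establish a quadratic growth estimate at a maximiser: if $a \in Q_A(y)$ and $b \in A$, then $g_y(a) - g_y(b) \geq K \norm a - b \norm_W^2$. Here the convexity of $A$ is essential, as it guarantees that the interpolant $a_\lambda = \lambda b + (1-\lambda) a$ lies in $A$ and is thus an admissible competitor. Applying strong concavity to $g_y(a_\lambda)$ and using the optimality inequality $g_y(a) \geq g_y(a_\lambda)$ yields, after subtracting and dividing by $\lambda > 0$, the bound $g_y(a) \geq g_y(b) + K(1-\lambda) \norm a - b \norm_W^2$; since the left-hand side is independent of $\lambda$, letting $\lambda \to 0^+$ produces the claimed estimate.

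Next I would apply this estimate twice. Taking $y = y_1$, $a = a_1$, $b = a_2$ gives $\langle y_1 , a_1 - a_2 \rangle - h(a_1) + h(a_2) \geq K \norm a_1 - a_2 \norm_W^2$, and taking $y = y_2$, $a = a_2$, $b = a_1$ gives $\langle y_2 , a_2 - a_1 \rangle - h(a_2) + h(a_1) \geq K \norm a_1 - a_2 \norm_W^2$. Adding these two inequalities, the $h$-terms cancel and one is left with $\langle y_1 - y_2 , a_1 - a_2 \rangle \geq 2K \norm a_1 - a_2 \norm_W^2$, which is precisely the first asserted inequality.

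Finally I would convert this into the Lipschitz bound by estimating the left-hand side from above through the duality pairing, $\langle y_1 - y_2 , a_1 - a_2 \rangle \leq \norm y_1 - y_2 \norm_{W^*} \norm a_1 - a_2 \norm_W$. Combining with the previous display and dividing by $\norm a_1 - a_2 \norm_W$ (the case $a_1 = a_2$ being trivial) gives $\norm a_1 - a_2 \norm_W \leq \frac{1}{2K} \norm y_1 - y_2 \norm_{W^*}$. Specialising to $y_1 = y_2$ forces $\norm a_1 - a_2 \norm_W = 0$, hence $a_1 = a_2$, which is the single-valuedness of $Q_A$. The only delicate point is the quadratic growth step, where the convexity of $A$ and the limiting argument in $\lambda$ must be handled carefully; the remaining manipulations are direct.
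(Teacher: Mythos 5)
Your proof is correct and follows essentially the same route as the paper: interpolate between the two maximisers inside the convex set $A$, combine the optimality inequality with the $K$-strong convexity of $h$, divide by the interpolation parameter and let it tend to $0^+$ to get the one-sided estimate $\langle y , a - b \rangle \geq h(a) - h(b) + K \norm a - b \norm_W^2$, then symmetrise and add. The only cosmetic difference is that you package the argument via the strongly concave objective $g_y$ and state the quadratic growth estimate as a separate step, which the paper carries out inline.
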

\begin{proof}
Since $A$ is convex, for every $\epsilon \in (0,1]$ we have $(1-\epsilon)a_1 + \epsilon a_2 \in A$. By definition:
$$\langle y_1 , a_1 \rangle - h(a_1) \geq \langle y_1 , (1-\epsilon)a_1 + \epsilon a_2 \rangle - h((1-\epsilon)a_1 + \epsilon a_2),$$
$$h((1-\epsilon)a_1 + \epsilon a_2) \leq (1-\epsilon)h(a_1) + \epsilon h(a_2) - K\epsilon(1-\epsilon) \norm a_1 - a_2 \norm^2.$$
So by combining the above inequalities:
$$\langle y_1 , a_1 \rangle - h(a_1) \geq \langle y_1 , (1-\epsilon)a_1 + \epsilon a_2 \rangle - (1-\epsilon)h(a_1) - \epsilon h(a_2) + K\epsilon(1-\epsilon) \norm a_1 - a_2 \norm^2,$$
which gives:
$$\epsilon \langle y_1 , a_1 - a_2 \rangle \geq \epsilon h(a_1) -\epsilon h(a_2) + K\epsilon(1 - \epsilon) \norm a_1 - a_2 \norm^2.$$
After dividing the both sides by $\epsilon$ and then $\epsilon \to 0^+$ we will get:
$$\langle y_1 , a_1 - a_2 \rangle \geq h(a_1) - h(a_2) + K \norm a_1 - a_2 \norm^2.$$
By exchanging the role of $(a_1 , y_1)$ and $(a_2 , y_2)$ we have:
$$\langle y_2 , a_2 - a_1 \rangle \geq h(a_2) - h(a_1) + K \norm a_2 - a_1 \norm^2.$$
It yields the desired result if we sum up the two later inequalities.
\end{proof}
\begin{definition}
Let $F: W \to \R$ be a convex function. We say that $ v \in W^*$ is a subgradient of $F$ at $a \in W$ if:
$$\forall \; b\in W : \quad F(b) - F(a) \geq \langle v, b-a \rangle,$$ 
and set $\partial F (a) \subseteq W^*$ the set of all subgradients at $a$.
\end{definition}
One can notice that if $F:W \to \R$ is differentiable at $a\in W$ then $\partial F (a) = \{ D F (a) \}$.
\subsection{OMD algorithm and convergence result}
Consider an anonymous game $G=(I, \lambda ,V,(A_i)_{i\in I} , \J)$. Suppose that the following conditions hold:
\begin{itemize}
	\item
	there is a normed vector space $(W, \norm \cdot \norm_W)$ such that $$ \bigcup_{i\in I} A_i \subseteq W \subseteq V,$$ and let $h:W \to \R$ be a $K-$strongly convex function for a real $K>0$.
	\item
	for every $i\in I$ the action sets $A_i$ are weakly closed in $W$ and $h$ is weakly lower semi-continuous and coercive (and hence $Q_{A_i}$ is single valued),
	\item
	for every $(a,\eta) \in W \times \mes_G (V)$ the function $\J (\cdot , \eta) : W \to \R$ is convex and exists a subgradient $y(a,\eta) \in \partial_a \J(\cdot , \e) \subseteq W^*$,
\end{itemize}
Let $\{\beta_n\}_{n\in \N}$ be a sequence of real positive numbers. Set an arbitrary initial measurable functions $\profile_0 \in \A , \; \eta_{0}=\profile_{0} \sharp \lambda , \; \dual_0 : I \to W^*$. The following procedure \eqref{OMD} is called the \textit{Online Mirror Descent (OMD)} on anonymous game $G$:
\begin{equation}\label{OMD}
\begin{array}{lllll}
(i)&\dual_{n+1}(i)&= &\dual_n (i) - \beta_n y (\profile_n (i) , \e_n), &\text{for every} \; i \in I \\
(ii)&\profile_{n+1}(i) &= &Q_{A_{i}} ( \dual_{n+1}(i) ) , &\text{for every} \; i \in I \\
(iii)&\e_{n+1} &= &\profile_{n+1} \sharp \lambda.&
\end{array}
\end{equation}

\begin{theorem}\label{OMDNAG}
Suppose one applies the OMD algorithm proposed in \eqref{OMD} for $\beta_n = \frac{1}{n}$. Suppose the following conditions hold:
\begin{enumerate}[i.]
	\item
	the game $G$ satisfies (\H) conditions,
	\item
	for every $i\in I$ the action sets $A_i$ are convex and 
	and exists $M >0$ such that for $\lambda-$almost every $i\in I$ we have $\norm a \norm_W \leq M$ for all $a\in A_i$ 	and we have $R(M) := \sup_{\norm a \norm \leq M} \abs h(a) \abs < +\infty,$
	\item
	the map $\dual_0 : I \to W^*$ is bounded,
	\item
	the cost function $\J$ is monotone,
	\item
	there exists $k>0$ such that:
	\begin{equation}\label{OMDcondition}
	\forall n\in \N: \quad \norm y (\profile_{n}(i) , \e_n) \norm_{W^*} \;  , \; \norm y (\profile_{n}(i) , \NEe) \norm_{W^*} \leq k, \quad \text{for $\lambda-$almost every }i\in I,
	\end{equation}
\end{enumerate}
then $\e_n = \profile_n \sharp \lambda$ converges to $\NEe = \NEprofile \sharp \lambda$ where $\NEe \in \mes_G(V)$ is the unique Nash equilibrium distribution.
\end{theorem}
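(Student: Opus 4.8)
The plan is to produce a Lyapunov quantity $\phi_n$ from the Fenchel coupling to the (unique, by Theorem \ref{NashUnique}) equilibrium and show $\phi_n\to 0$. Let $\NEprofile$ be the Nash profile, $\NEe=\NEprofile\sharp\lambda$, and for $p\in A_i$, $y\in W^*$ write the per-player coupling
$$D_i(p,y)=h(p)+h^*_{A_i}(y)-\langle y,p\rangle\ge 0,$$
setting $\phi_n=\int_I D_i(\NEprofile(i),\dual_n(i))\,\d\lambda(i)$. The same $K$-strong convexity estimate behind Lemma \ref{QLipschitz} gives $D_i(p,y)\ge K\norm p-Q_{A_i}(y)\norm_W^2$, hence $\phi_n\ge K\int_I\norm\profile_n(i)-\NEprofile(i)\norm_W^2\,\d\lambda$; together with the computation of Lemma \ref{LEMMA:ContTheta} (so that $\d_1(\e_n,\NEe)\le\int_I\norm\profile_n(i)-\NEprofile(i)\norm_W\,\d\lambda$) and Jensen's inequality this yields $\d_1(\e_n,\NEe)^2\le\phi_n/K$. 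Thus it suffices to prove $\phi_n\to 0$.

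Next I would establish a one-step recursion. The dual step $\dual_{n+1}(i)=\dual_n(i)-\beta_n y(\profile_n(i),\e_n)$ and the standard Fenchel-coupling descent inequality (again a consequence of $K$-strong convexity, via $D_i(p,y^+)-D_i(p,y)=h^*_{A_i}(y^+)-h^*_{A_i}(y)-\langle y^+-y,p\rangle$ and $Q_{A_i}=\nabla h^*_{A_i}$) give, after integrating in $i$,
$$\phi_{n+1}\le\phi_n-\beta_n\int_I\langle y(\profile_n(i),\e_n),\profile_n(i)-\NEprofile(i)\rangle\,\d\lambda+\frac{\beta_n^2}{4K}\int_I\norm y(\profile_n(i),\e_n)\norm_{W^*}^2\,\d\lambda.$$
The last term is at most $\tfrac{k^2}{4K}\beta_n^2$ by assumption (v). For the middle term, convexity of $\J(\cdot,\e_n)$ and the subgradient property give $\langle y(\profile_n(i),\e_n),\profile_n(i)-\NEprofile(i)\rangle\ge \J(\profile_n(i),\e_n)-\J(\NEprofile(i),\e_n)$, whose integral is $\delta_n:=\int_V \J(a,\e_n)\,\d(\e_n-\NEe)(a)$, and $\delta_n\ge 0$ by monotonicity together with the equilibrium inequality of Remark \ref{Col1}. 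Hence
$$\phi_{n+1}\le\phi_n-\beta_n\delta_n+\frac{k^2}{4K}\beta_n^2,\qquad n\in\N.$$

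Since $\beta_n=1/n$ gives $\sum_n\beta_n^2<\infty$, the sequence $\phi_n+\frac{k^2}{4K}\sum_{m\ge n}\beta_m^2$ is nonincreasing and bounded below, so $\phi_n$ converges to some $\phi_\infty\ge 0$ and $\sum_n\beta_n\delta_n<\infty$. As $\sum_n\beta_n=\infty$ and $\delta_n\ge 0$, we get $\liminf_n\delta_n=0$; fix $\delta_{n_k}\to 0$. From $0\le\int_I\big(\J(\profile_{n_k}(i),\NEe)-\J(\NEprofile(i),\NEe)\big)\,\d\lambda\le\delta_{n_k}\to 0$ and the nonnegativity of the integrand, the gap tends to $0$ in $L^1(\lambda)$, hence $\lambda$-a.e. along a further subsequence. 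Because $\NEprofile(i)$ is the \emph{unique} minimiser of the lower semi-continuous $\J(\cdot,\NEe)$ on the compact $A_i$, near-minimisers converge to it (the accumulation-point argument of Lemma \ref{LEMMA:ContBR}), so $\profile_{n_k}(i)\to\NEprofile(i)$ a.e.; by dominated convergence $\e_{n_k}\xrightarrow{\d_1}\NEe$ and $\int_I\norm\profile_{n_k}(i)-\NEprofile(i)\norm_W^2\,\d\lambda\to 0$.

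It remains to upgrade this subsequential convergence of play into $\phi_n\to 0$. As $\phi_n$ already converges, it suffices to show $\phi_{n_k}\to 0$, i.e. $\phi_\infty=0$, and this is the main obstacle. Writing $D_i(\NEprofile(i),\dual_{n_k}(i))=\big(h(\NEprofile(i))-h(\profile_{n_k}(i))\big)-\langle\dual_{n_k}(i),\NEprofile(i)-\profile_{n_k}(i)\rangle$, the first bracket vanishes in the limit but the dual pairing need not: $\norm\dual_n(i)\norm_{W^*}$ may grow (it is only $O(\log n)$ under (v)) while $\norm\NEprofile(i)-\profile_{n_k}(i)\norm_W\to 0$ with no rate, so $\profile_{n_k}\to\NEprofile$ does not by itself force the coupling to vanish -- a reciprocity failure that genuinely occurs for the Euclidean regulariser on strictly curved sets. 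I would close the gap with a reciprocity property of $(h,A_i)$, namely that $Q_{A_i}(y_k)\to p$ implies $D_i(p,y_k)\to 0$. This holds, for instance, when the $A_i$ are affine or polyhedral -- as in the first-order MFG of Section 5, where the displacement $\NEprofile(i)-\profile_{n_k}(i)$ stays tangent to $A_i$ and is therefore annihilated by the growing normal part of $\dual_{n_k}(i)$ -- or when $h$ is steep/coercive so that $Q_{A_i}$ keeps the iterates off the troublesome part of the boundary; the equilibrium-subgradient bound $\norm y(\profile_n(i),\NEe)\norm_{W^*}\le k$ in (v) is exactly what controls the normal component. Granting reciprocity, $\phi_{n_k}\to 0$, hence $\phi_\infty=0$, hence $\phi_n\to 0$, and finally $\d_1(\e_n,\NEe)^2\le\phi_n/K\to 0$, which is the claim.
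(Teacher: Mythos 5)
Your Lyapunov function, the one-step descent inequality $\phi_{n+1}\le\phi_n-\beta_n\alpha_n+C\beta_n^2$, and the chain $\alpha_n\ge\int_V\J(a,\e_n)\,\d(\e_n-\NEe)(a)\ge\int_V\J(a,\NEe)\,\d(\e_n-\NEe)(a)\ge 0$ all match the paper. The genuine gap is exactly where you flag it: you route the conclusion through $\phi_n\to 0$, which forces you to invoke an unproved ``reciprocity'' property of $(h,A_i)$, and you end up assuming it (``Granting reciprocity\dots''). Nothing in hypotheses (i)--(v) delivers that property, so as written this is a conditional argument, not a proof of the theorem.

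The paper's proof never needs $\phi_n\to 0$. It uses only $\phi_n\ge 0$ and the telescoped recursion to obtain $\sum_n\psi_n/n<+\infty$, where $\psi_n=\int_V\J(a,\NEe)\,\d(\e_n-\NEe)(a)\ge 0$ is the gap evaluated at the \emph{equilibrium} distribution rather than at $\e_n$. It then invokes the second half of assumption (v) --- the bound $\norm y(\profile_n(i),\NEe)\norm_{W^*}\le k$, which plays no real role in your argument --- together with the $O(1/n)$ size of the primal increments $\norm\profile_{n+1}(i)-\profile_n(i)\norm_W$ to show $\abs\psi_{n+1}-\psi_n\abs=O(1/n)$; Lemma \ref{Convtozero} (the Monderer--Shapley lemma) then upgrades summability of $\psi_n/n$ to $\lim_n\psi_n=0$ along the \emph{whole} sequence, not merely a subsequence. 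Finally, precompactness of $\mes_G(V)$, lower semicontinuity of $\J(\cdot,\NEe)$, and the disintegration argument of Corollary \ref{COR:DisintCOV} show that any accumulation point $\e'$ of $\{\e_n\}$ satisfies $\int_V\J(a,\NEe)\,\d(\e'-\NEe)(a)\le 0$ and hence equals $\NEe$; convergence of $\e_n$ in $\d_1$ follows. This closes the proof without any statement about the Fenchel coupling vanishing, which is precisely the step you could not justify. A secondary issue: your bound $\d_1(\e_n,\NEe)\le\int_I\norm\profile_n(i)-\NEprofile(i)\norm_W\,\d\lambda(i)$ implicitly requires $\d_V(a,b)\le\norm a-b\norm_W$, which is not among the hypotheses; the paper's compactness route sidesteps this as well.
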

The proof requires a few intermediate steps:
\begin{lemma}\label{Convtozero}
Consider a sequence of positive real numbers $\{ a_n \}_{n \in \N}$ such that $\sum_{n=1}^{\infty} \frac{a_n}{n} < +\infty$. Then we have
$$\lim_{N \to \infty} \frac{1}{N} \sum_{n=1}^{N} a_n = 0.$$
In addition, if there is a constant $C >0 $ such that $\abs a_n - a_{n+1} \abs < \frac{C}{n}$ then $\lim_{n \to \infty} a_n = 0$
\end{lemma}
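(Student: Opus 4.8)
The plan is to treat the two assertions separately. The first is a Cesàro-type statement that I would obtain directly, by splitting the average at a threshold chosen from the convergent tail of $\sum_n a_n/n$. The second is a Tauberian upgrade, in which the hypothesis $\abs a_n - a_{n+1} \abs < C/n$ plays the role of a slow-oscillation condition and the first part supplies the contradiction.

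For the first part, fix $\epsilon > 0$. Since $\sum_{n} a_n/n$ converges, choose $M$ with $\sum_{n \geq M} a_n/n < \epsilon$. For $N > M$ I would split
$$\frac{1}{N}\sum_{n=1}^N a_n = \frac{1}{N}\sum_{n=1}^{M-1} a_n + \frac{1}{N}\sum_{n=M}^N a_n.$$
The first term tends to $0$ as $N\to\infty$ because its numerator is a fixed constant. For the second term, the key trick is to write $a_n = n\cdot(a_n/n)$ and exploit $n \leq N$ on the summation range, giving
$$\frac{1}{N}\sum_{n=M}^N a_n = \sum_{n=M}^N \frac{n}{N}\cdot\frac{a_n}{n} \leq \sum_{n=M}^N \frac{a_n}{n} < \epsilon.$$
Hence $\limsup_N \frac1N\sum_{n=1}^N a_n \leq \epsilon$, and since $\epsilon$ is arbitrary while the averages are nonnegative, the limit is $0$.

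For the second part I would argue by contradiction, using the first conclusion as the source of the contradiction. Suppose $a_n \not\to 0$; since $a_n \geq 0$ there are $\delta > 0$ and indices $n_k \to \infty$ with $a_{n_k} \geq \delta$. The oscillation bound controls how fast $a$ can drop after $n_k$: for $m \geq n_k$,
$$a_m \geq a_{n_k} - \sum_{j=n_k}^{m-1} \abs a_{j+1} - a_j \abs \geq \delta - C\sum_{j=n_k}^{m-1}\frac1j \geq \delta - C\,\frac{m-n_k}{n_k},$$
where the last step uses $1/j \leq 1/n_k$ on the range. Setting $\theta = \delta/(2C)$, this shows $a_m \geq \delta/2$ for every integer $m \in [n_k, (1+\theta)n_k]$. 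Averaging up to $N_k = \lfloor (1+\theta)n_k \rfloor$ then yields
$$\frac{1}{N_k}\sum_{m=1}^{N_k} a_m \geq \frac{1}{N_k}\sum_{n_k \leq m \leq N_k} a_m \geq \frac{\lfloor \theta n_k\rfloor}{(1+\theta)n_k}\cdot\frac{\delta}{2},$$
whose right-hand side stays bounded below by a positive constant as $k\to\infty$. This contradicts $\frac1N\sum_{m=1}^N a_m \to 0$ from the first part, so $a_n \to 0$.

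The first assertion is routine; the delicate point is the second, where I must ensure the interval on which $a_m$ stays bounded below is \emph{long enough to matter in the Cesàro average}. This is exactly why I use the linear bound $\sum_{j=n_k}^{m-1} 1/j \leq (m-n_k)/n_k$ rather than the cruder logarithmic estimate: the window must have length a fixed proportion $\theta n_k$ of $n_k$, so that its contribution to $\frac1{N_k}\sum$ does not vanish. The main thing to verify carefully is that $\theta$ depends only on $\delta$ and $C$, not on $k$, which is what makes the lower bound on the averages uniform and the contradiction genuine.
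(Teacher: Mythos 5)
Your proof is correct and, unlike the paper, is self-contained: the paper's ``proof'' of this lemma is simply a pointer to Monderer--Shapley, so you are supplying an argument the paper omits entirely. Both halves check out. The Ces\`aro half is handled cleanly by the bound $\frac{1}{N}\sum_{n=M}^{N} a_n = \sum_{n=M}^{N}\frac{n}{N}\cdot\frac{a_n}{n}\le \sum_{n\ge M}\frac{a_n}{n}$, which is exactly the right use of positivity and of the convergent tail. The Tauberian half is also sound: the slow-oscillation bound $\abs a_m - a_{n_k}\abs \le C\sum_{j=n_k}^{m-1}1/j \le C(m-n_k)/n_k$ gives $a_m\ge \delta/2$ on a window of length $\theta n_k$ with $\theta=\delta/(2C)$ depending only on $\delta$ and $C$, and a window of length proportional to $n_k$ is precisely what is needed to force the Ces\`aro average up to $N_k=\lfloor(1+\theta)n_k\rfloor$ to stay at least $\frac{\theta}{1+\theta}\cdot\frac{\delta}{2}+o(1)$, contradicting the first part; positivity of the $a_n$ is used, legitimately, both to discard the terms with $m<n_k$ and to pass from $a_n\not\to 0$ to a subsequence bounded below. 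This is essentially the classical argument behind the cited lemma, so nothing is gained or lost relative to the reference except that your version makes the paper's appeal to it verifiable on the spot.
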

\begin{proof}
See \cite{MondrerShapley2}.
\end{proof}
\begin{lemma}\label{ObviousLemma}
For every $y,z \in W^*$ and any $A\subseteq W$ we have :
$$\forall a\in Q_A (y): \quad h^*_A (y) - h^*_A (z) \leq \langle y - z , a \rangle .$$
\end{lemma}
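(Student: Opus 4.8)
The plan is to unwind the definitions of the Fenchel conjugate $h^*_A$ and its associated maximiser correspondence $Q_A$; once these are written out explicitly, the inequality follows in a single line, which is what justifies the lemma's name. First I would fix an arbitrary $a \in Q_A(y)$ (the statement is quantified over such $a$, so we may simply select one). By the definition of the maximiser correspondence, this $a$ attains the supremum appearing in
$$h^*_A(y) = \sup_{b \in A} \; \langle y, b \rangle - h(b),$$
and therefore $h^*_A(y) = \langle y, a \rangle - h(a)$.

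The second ingredient is that $a \in Q_A(y) \subseteq A$, so $a$ is an admissible competitor in the supremum defining $h^*_A(z)$; this yields the lower bound $h^*_A(z) \geq \langle z, a \rangle - h(a)$. Subtracting this from the exact expression for $h^*_A(y)$, the $h(a)$ terms cancel and I obtain
$$h^*_A(y) - h^*_A(z) \leq \big( \langle y, a \rangle - h(a) \big) - \big( \langle z, a \rangle - h(a) \big) = \langle y - z, a \rangle,$$
which is precisely the claim.

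There is no genuine obstacle in this argument. The only point worth keeping in mind is that in full generality $Q_A(y)$ could be empty or multivalued, so the inequality is asserted only for those $a$ that actually lie in $Q_A(y)$; since we have fixed such an $a$ from the outset, the two-line computation above applies verbatim and no regularity on $h$ or $A$ (such as strong convexity or weak closedness) is needed here.
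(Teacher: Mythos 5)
Your proof is correct and is exactly the paper's argument, just written out in more detail: the paper's one-line justification $h^*_A(y) - \langle y , a \rangle + h(a) = 0 \leq h^*_A(z) - \langle z , a \rangle + h(a)$ is precisely your two observations (that $a$ attains the supremum defining $h^*_A(y)$ and is an admissible competitor for $h^*_A(z)$) combined. Nothing further is needed.
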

\begin{proof}
The Lemma is obvious since $ h^*_A(y) - \langle y , a \rangle + h(a) = 0 \leq h^*_A(z) - \langle z , a \rangle + h(a)$.
\end{proof}
\begin{proof}[Proof of Theorem \ref{OMDNAG}]
Let $\NEprofile \in \mathcal{A}$ be a Nash equilibrium profile. Define the real sequence $\{\phi_n \}_{n\in \N}$ as follows:
$$\forall n\in \N: \quad \phi_n = \int_{I} \left( h (\NEprofile (i)) + h^*_{A_i} (\dual_n(i)) - \langle \dual_n(i) , \NEprofile(i)  \rangle \right) \; \d \lambda(i).$$
We first show $ \abs \phi_n \abs < \infty$. We have
$$ \int_{I} \left( h (\NEprofile (i)) + h^*_{A_i} (\dual_n(i)) - \langle \dual_n(i) , \NEprofile(i)  \rangle \right) \; \d \lambda(i) $$ $$=  \int_{I} \left( h (\NEprofile (i)) - h (\profile_n (i)) - \langle \dual_n(i) , \NEprofile(i) -\profile_n (i) \rangle \right) \; \d \lambda(i)$$
and
$$ \abs \; h (\NEprofile (i)) - h (\profile_n (i)) - \langle \dual_n(i) , \NEprofile(i) -\profile_n (i) \rangle \; \abs
\leq 2 R(M) + 2 \norm \dual_n (i) \norm_{W^*} M, $$
since $\norm \NEprofile (i) \norm_W , \norm \profile_n (i) \norm_W \leq M$ for $\lambda-$almost every $i\in I$. In addition by \eqref{OMD}$(i)$:
$$\forall n\in \N: \quad \norm \dual_n \norm_{\infty} \leq k(1 + \frac{1}{2} + \cdots + \frac{1}{n-1}) + \norm \dual_0 \norm_{\infty}$$
which proves $\abs \phi_n \abs < \infty$.
By definition of Fenchel conjugate we have $\phi_n \geq 0$.
Let us compute the difference $\phi_{n+1} - \phi_n$:
$$
\phi_{n+1} - \phi_{n} = \int_{I} \left( h^*_{A_i} (\dual_{n+1}(i)) - h^*_{A_i} (\dual_n(i)) - \langle \dual_{n+1}(i)-\dual_n(i) , \NEprofile (i)  \rangle \right) \; \d \lambda(i)
$$
So from Lemma \ref{ObviousLemma}:
\begin{equation*}
\begin{split}
\phi_{n+1} - \phi_{n} &\leq \int_{I}\langle \dual_{n+1}(i)-\dual_n(i) ,\profile_{n+1}(i) - \NEprofile(i)  \rangle \; \d \lambda(i) \\
&= - \beta_n\int_{I}\langle y (\profile_{n}(i) , \e_n) ,\profile_{n+1}(i) - \NEprofile (i)  \rangle \; \d \lambda(i) \\
&= - \beta_n\int_{I} \left( \langle y (\profile_{n}(i) , \e_n) ,\profile_{n}(i) - \NEprofile (i)  \rangle + \langle y (\profile_{n}(i) , \e_n) ,\profile_{n+1}(i) - \profile_{n}(i)  \rangle \right)  \; \d \lambda(i) \\
&\leq - \beta_n \alpha_n  + C\beta_n^2
\end{split}
\end{equation*}
where $\alpha_n = \int_{I}\langle y (\profile_n(i) , \e_n) ,\profile_n(i) - \NEprofile (i)  \rangle \; \d \lambda(i)$ and since by condition \eqref{OMDcondition} we have: 
$$\abs \langle y (\profile_{n}(i) , \e_n) ,\profile_{n+1}(i) - \profile_{n}(i)  \rangle \abs \leq k \norm \profile_{n+1}(i) - \profile_{n}(i) \norm_{W} $$
$$\leq \frac{k}{2K} \norm \dual_{n+1}(i) - \dual_{n}(i) \norm_{W^*} = \beta_n \frac{k}{2K} \norm y (\profile_{n}(i) , \e_n) \norm_{W^*} \leq \beta_n \frac{k^2}{2K}. $$
From to the definition of the subgradient we have:
$$\forall b \in W : \quad \langle y (a,\e_n) , a-b \rangle \geq \J (a,\e_n) - \J (b,\e_n) . $$
So:
$$\alpha_n = \int_{I}\langle y (\profile_n(i) , \e_n) ,\profile_n(i) - \NEprofile (i)  \rangle \; \d \lambda(i) \geq \int_{I} \left( \J (\profile_n(i) , \e_n) - \J (\NEprofile(i) , \e_n) \right) \; \d \lambda(i) =$$ $$ \int_{X} \J (a, \e_n)\; \d (\e_n - \NEe)(a) \geq \int_{X} \J (a, \NEe)\; \d (\e_n - \NEe)(a) = \psi_n \geq 0,$$
by Remark \ref{Col1}. Since $\beta_n = \frac{1}{n}$ we have:
\begin{equation}\label{SUM/n}
\sum_{n=1}^{N} \frac{\psi_n}{n} \leq \sum_{n=1}^{N} \frac{\alpha_n}{n}  = \sum_{n=1}^{N} \beta_n \alpha_n\leq \sum_{n=1}^{N} \left( \phi_n - \phi_{n+1} + \frac{C}{n^2} \right) = \phi_1 - \phi_{N+1} + \sum_{n=1}^{N} \frac{C}{n^2} < +\infty
\end{equation}
and:
$$\psi_{n+1} - \psi_ n = \int_{X} \J (a, \NEe)\; \d (\e_{n+1} - \e_{n})(a) = \int_{X} \left( \J (\profile_{n+1}(i), \NEe) - \J (\profile_{n}(i), \NEe) \right) \; \d \lambda(i).$$
From the definition of subgradient:
$$
\langle y(\profile_{n}(i) , \NEe) , \profile_{n+1}(i) - \profile_{n}(i) \rangle
\leq \J (\profile_{n+1}(i), \NEe) - \J (\profile_{n}(i), \NEe) \leq \langle y(\profile_{n+1}(i) , \NEe) , \profile_{n+1}(i) - \profile_{n}(i) \rangle $$
so $\abs \J (\profile_{n+1}(i), \NEe) - \J (\profile_{n}(i), \NEe) \abs = O(1/n)$ which gives $\abs \psi_{n+1} - \psi_n \abs = O(1/n)$. The later and inequality \eqref{SUM/n} yield $\lim_{n\to \infty} \psi_n=0$ by Lemma \ref{Convtozero}.

Since $\mes_G(V)$ is precompact, there exist a sequence $\{n_i \}_{i \in \N} \subseteq \N$ and $\e' \in \overline{\mes_G (V)}$ such that $\lim_{i \to \infty} \e_{n_i} = \e'$. Since $\J(\cdot , \NEe) : V \to \R$ is lower semi-continuous, we have:
$$
\int_V \J (a,\NEe) \; \d ( \e' - \NEe ) (a) \leq \lim \inf_{i} \int_V \J (a,\NEe) \; \d ( \e_{n_i} - \NEe )= \lim \inf_{i} \psi_{n_i} =0 , $$
which yields $\e' = \NEe$ due to the Corollary \ref{COR:DisintCOV} and the definition of Nash equilibrium distribution. So every accumulation point of set $\{ \e_n \}_{n\in \N} \subseteq \mes_G(V)$ is $\NEe$ which gives
$ \lim_{n \to \infty} \e_n = \NEe$
since $\mes_G (V)$ is precompact.
\end{proof}

\section{Application to First Order Mean Field Games}
\subsection{Model}
Let us define the first-order mean field games as special example of anonymous games proposed in section 1.  
Set $I=\R^d$ with the usual topology as the set of players and $m_0 \in \mes(I)$ a given Borel probability measure on $\R^d$. Let $V= \C^0 ([0,T] , \R^d)$ endowed with the supremum norm $\norm \y \norm_{\infty} = \sup_{t\in [0 , T ]} \norm \y(t) \norm$. For each player $ i \in \R^d $ let $A_i= S_{i,M}\subseteq V$ where:
\begin{equation}\label{M}
\forall x\in \R^d , \; M>0: \quad S_{x,M}:=  \{ \gamma \in H^1 ([0,T] , \R^d) \; | \;\gamma(0) = x , \; \norm \dot{\y} \norm_{L^2} \leq \sqrt{T} M \},
\end{equation}
i.e. the action set of every player $i\in\R^d$ is the paths with initial points equal to $i$ and bounded $L^2-$norm of velocity. We will explain how to choose $M>0$ properly.

Let $\mes(V)$ be the set of Borel probability measures on $V$ and set for every $t\in [0,T]$ the evaluation function $e_t : V \to \R^d$ as $e_t(\y) = \y(t)$. The MFG cost function $\J: V \times \mes(V) \to \R$ is defined as follows:
$$\J (\gamma , \eta) = 
\begin{cases}
\int_{0}^{T} \left(  L(\gamma_t , \dot{\gamma}_t) + f(\gamma_t , e_t \sharp \eta) \right) \; \d t + g(\gamma_T , e_T \sharp \eta), \quad &\text{if} \; \y \in H^1 ([0,T] , \R^d) \\
+\infty &\text{otherwise}.
\end{cases}
$$
It describes that a players choosing a path $\y$ has to pay first for its velocity, by the map Lagrangian $L: \R^d \times \R^d \to \R$, and second for his interaction with the population and congested areas, by the couplings $f,g : \R^d \times \mes(\R^d) \to \R$. We call the Anonymous Game
$$G=(\R^d, m_0 ,\C^0 ([0,T] , \R^d),(S_{i,M})_{i\in \R^d} , \J )$$
defined above, a \textit{first-order mean field game}. 

We suppose from now on, that the following conditions $(*)$ hold:
\begin{enumerate}[i.]
\item
$m_0$ has a compact support,
\item
for every $ x \in \R^d$ the function $L(x,\cdot) : \R^d \to \R^d$ is strongly convex $\C^2(\R^d , \R^d)$ function and there exists a non-decreasing coercive map $\theta : \R_+ \to \R_+$ such that: 
$$ \exists\; c_0 , C > 0: \quad  \theta(\norm v \norm) - c_0 \leq L(x,v) \leq C(1 + \norm v \norm^2),$$
$$\forall x\in B_r, v\in \R^d: \quad \max (\norm L_x(x,v) \norm , \norm L_v(x,v) \norm ) \leq C(r) \theta(\norm v \norm)$$
and the map $\theta$ satisfies the following growth condition:
$$\forall D>0, \exists K_D >0: \quad \theta(q+m) \leq K_D (1 + \theta(q)) \quad \text{for every } q\in \R_+ ,\; m\in[0,D]$$
\item
the couplings $f,g$ are conitnuous and for every $m \in \mes(\R^d)$ the maps $f(\cdot , m) , g(\cdot , m) : \R^d \to \R^d$ are $\C^1(\R^d , \R^d)$ and there exist $C,b>0$ such that:
$$\forall \; (x,m) \in \R^d \times \mes(\R^d): \quad \max(\norm f_x(x,m) \norm , \norm g_x(x,m) \norm ) \leq C (\norm x \norm^{b} + 1).$$
\end{enumerate}
The convexity of $L(x,\cdot)$ implies that for every $\eta \in \mes(V)$ the global cost $\J (\cdot , \e)$ is lower semicontinuous. It can be shown (see \cite{CannarsaSinestrari}, Section 6) if condition $(*)(ii,iii)$ hold, then there is at least one minimizer of
$$\min_{\y \in \mathcal{AC}([0,T] , \R^d), \; \y(0)=x}\int_{0}^{T} \left(  L(\gamma_t , \dot{\gamma}_t) + f(\gamma_t , e_t \sharp \e) \right) \; \d t + g(\gamma_T , e_T \e)$$
where $\mathcal{AC}([0,T] , \R^d)$ denotes the set absolutely continuous function from $[0,T]$ to $\R^d$. The minimizer $\y : [0,T] \to \R^d$ belongs to $\C^1 ([0,T] , \R^d)$ and moreover $L_v (\y_t , \dot{\y}_t)$ is absolutely continuous and $\y$ satisfies the Euler-Lagrange equation:
\begin{equation}\label{EulerLagrange}
\left\{
\begin{split}
(i) \qquad &\frac{\d}{\d t} L_v (\y_t , \dot{\y}_t) =L_x (\y_t , \dot{\y}_t) + f_x (\y_t , e_t \sharp \e), \quad \text{for almost every $t\in [0,T]$,} \\
(ii) \qquad &\y(0) = x, \quad \frac{\d}{\d t} L_v (\y_t , \dot{\y}_t) \abs_{t=T} = - g_x (\y_T , e_T \sharp \e).  \end{split}\right.
\end{equation}
In addition by $(*)(iii)$ there is $M(r)>0$ such that 
\begin{equation}\label{EULAGbounded}
\norm \y \norm_{\infty}, \norm \dot{\y} \norm_{\infty} \leq M(r)
\end{equation}
for every solution of \eqref{EulerLagrange} with $\y(0) = x$ and $x\in B_r$ (see \cite{CannarsaSinestrari} Theorem 6.3.1). 
The later ODE does not necessarily posses a unique solution. The uniqueness is more subtle. Define the value function $u: [0,T]\times \R^d \to \R$ as:
$$\forall (s,x) \in [0,T]\times \R^d: \quad u(s,x) = \min_{\y \in \mathcal{AC}([0,T] , \R^d) , \y(s)= x }  \int_{s}^{T} \left(  L(\gamma_t , \dot{\gamma}_t) + f(\gamma_t , e_t \sharp \e ) \right) \; \d t + g(\gamma_T , e_T \sharp \e).$$
Then it can be shown (see \cite{CDnotesonMFG}, section 4) that $u(0,\cdot) : \R^d \to \R$ is Lipschitz continuous and if it is derivable at $x\in \R^d$ then the solution of $\eqref{EulerLagrange}$ satisfies
\begin{equation}\label{UNEulerLagrange}
\left\{
\begin{split}
(i) \qquad &\frac{\d}{\d t} L_v (\y_t , \dot{\y}_t) = L_x (\y_t , \dot{\y}_t) + f_x (\y_t , e_t \sharp \e), \quad \text{for almost every $t\in [0,T]$, } \\
(ii) \qquad &\y_0 = x, \quad L_v (x , \dot{\y}_0) = - u_x (0,x).  \end{split}\right.
\end{equation}
It is a classical ODE with initial values and hence it possesses a unique solution. In other words, the optimal trajectories are unique in the initial points where $u(0,\cdot)$ is derivable i.e. almost everywhere since $u(0,\cdot)$ is Lipschitz. It provides the unique minimizer condition for first-order MFGs. Moreover, we choose $M$ in \eqref{M} equals to $M(R)$ in \eqref{EULAGbounded} with $$R=\sup_{x\in {\rm supp}(m_0)} \norm x \norm < +\infty.$$

\begin{remark}
If $K \subseteq \R^d$  be compact such that $\text{supp}(\m_0) \subseteq K$, then by Arzela-Ascoli the following set:
$$S = S_{K,M}=  \{ \gamma \in H^1 ([0,T] , \R^d) \; | \;\gamma(0) \in K \; , \; \norm \dot{\y} \norm_{L^2} \leq \sqrt{T}M \}.$$
is precomact in $V$. For every $\e \in \CVm$ we have $\text{supp}(\e) \subseteq S$, so $\CVm$ is tight and hence it is precompact in $(\mes_1 (V) , \d_1)$.	
\end{remark}

\begin{corollary}
The first-order MFGs defined above, satisfies the (\H) conditions and hence by Theorem \ref{THM:NashExist} has at least a Nash Equilibrium.
\end{corollary}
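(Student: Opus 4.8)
The plan is to verify the five conditions $(\H)$ of Theorem~\ref{THM:NashExist} for the game $G=(\R^d,m_0,\C^0([0,T],\R^d),(S_{i,M})_{i\in\R^d},\J)$ and then invoke that theorem, using the a priori bounds \eqref{EULAGbounded} on Euler--Lagrange minimizers, the a.e.\ differentiability of $u(0,\cdot)$, and the tightness noted in the Remark preceding the statement. I would start with the compactness conditions $(i)$ and $(iv)$. Each $A_i=S_{i,M}$ is compact in $V$: the constraint $\norm\dot\gamma\norm_{L^2}\le\sqrt T M$ gives, via Cauchy--Schwarz, $\norm\gamma(t)-\gamma(s)\norm\le\sqrt T M\,\abs{t-s}^{1/2}$, so with $\gamma(0)=i$ fixed the family is uniformly bounded and equicontinuous and Arzel\`a--Ascoli applies, while closedness follows because a uniform limit is the weak $H^1$ limit of a subsequence and $\gamma\mapsto\norm\dot\gamma\norm_{L^2}$ is weakly lower semicontinuous. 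Upper semicontinuity of $A$ is the closed-graph property: if $(i_n,\gamma_n)\to(i,\gamma)$ with $\gamma_n\in S_{i_n,M}$, then $\gamma(0)=\lim i_n=i$ and the velocity bound passes to the liminf. Lower semicontinuity uses the translation $\gamma\mapsto\gamma+(i-i_0)$, which preserves the velocity and hence the $L^2$ constraint, so a path in $S_{i_0,M}\cap U$ ($U$ open) is approached uniformly by admissible paths in $S_{i,M}$. For $(iv)$, the Remark shows every $\eta\in\CVm$ is supported in the fixed compact set $S=S_{K,M}$, so $\CV\subseteq\mes(\overline S)$, which is convex and $\d_1$-compact (since $\d_1$ metrizes weak convergence on measures over a compact set); being closed, $\CV$ is then compact.

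For $(ii)$ I would extend $\J$ to $V\times\CV$ by the same formula, the push-forwards $e_t\sharp\eta$ being well defined for every $\eta\in\CV\subseteq\mes(\overline S)$, and prove joint lower semicontinuity as an $\R\cup\{+\infty\}$-valued functional. Along $(\gamma_n,\eta_n)\to(\gamma,\eta)$ with uniformly bounded velocities, the derivatives converge weakly in $L^2$ and the convexity of $L(x,\cdot)$ yields, by the Tonelli--De Giorgi semicontinuity theorem, $\liminf_n\int_0^T L(\gamma_{n,t},\dot\gamma_{n,t})\,\d t\ge\int_0^T L(\gamma_t,\dot\gamma_t)\,\d t$; meanwhile $e_t$ is $1$-Lipschitz, so $e_t\sharp\eta_n\to e_t\sharp\eta$ weakly, and the continuity of $f,g$ with dominated convergence over the fixed compact supports makes the coupling terms converge. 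If $\gamma\notin H^1$ the velocities blow up and the coercivity of $\theta$ forces the $L$-term to $+\infty$, so lower semicontinuity holds there trivially.

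The main obstacle is $(iii)$. Lower semicontinuity of $\mathrm{Min}(i,\eta)=\min_{a\in A_i}\J(a,\eta)$ follows from $(ii)$ and the closed graph of $A$ by a Berge-type extraction: along a subsequence realizing $\liminf_n\mathrm{Min}(i_n,\eta_n)$, the bound \eqref{EULAGbounded} keeps the minimizers in the compact $S$, a further subsequence converges to some $\gamma^\star\in A_i$, and lower semicontinuity of $\J$ gives $\mathrm{Min}(i,\eta)\le\J(\gamma^\star,\eta)\le\liminf_n\mathrm{Min}(i_n,\eta_n)$. Upper semicontinuity, however, cannot be read off from Berge's theorem, because $\J$ is only lower (not upper) semicontinuous; instead I would build an explicit competitor. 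Let $\gamma^*\in A_i$ be optimal for $(i,\eta)$, which exists and is $C^1$ with velocity bounded by $M(R)$ by \eqref{EulerLagrange}--\eqref{EULAGbounded}. The translates $\gamma^*+(i_n-i)$ are admissible in $A_{i_n}$, and by continuity of $L,f,g$ together with $e_t\sharp\eta_n\to e_t\sharp\eta$ one obtains $\J(\gamma^*+(i_n-i),\eta_n)\to\J(\gamma^*,\eta)=\mathrm{Min}(i,\eta)$, whence $\limsup_n\mathrm{Min}(i_n,\eta_n)\le\mathrm{Min}(i,\eta)$. Combining the two directions gives continuity of $\mathrm{Min}$; I expect the bookkeeping in this upper-semicontinuity half—ensuring the coupling terms converge under $\d_1$-convergence of $\eta_n$ while the competitor remains admissible—to be the principal technical burden.

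Finally, $(v)$ is exactly what was argued in the construction of the model: for each $\eta$ the optimal trajectory from $i$ is the unique solution of the initial-value problem \eqref{UNEulerLagrange} at every $i$ where $u(0,\cdot)$ is differentiable, and since $u(0,\cdot)$ is Lipschitz this holds off a Lebesgue-null set, so the condition holds $\lambda$-a.e.\ once $m_0$ is null on that set (e.g.\ $m_0\ll\mathrm{Leb}$). With $(i)$--$(v)$ established, Theorem~\ref{THM:NashExist} yields at least one Nash equilibrium.
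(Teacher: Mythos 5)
Your proposal is correct and follows essentially the same route the paper takes implicitly: the paper offers no written proof of this corollary, relying instead on the preceding discussion (the tightness Remark for $(\H)(iv)$, the Euler--Lagrange theory and the bound \eqref{EULAGbounded} for compactness of the action sets and the choice of $M$, lower semicontinuity of $\J$ from the convexity of $L(x,\cdot)$, and the a.e.\ differentiability of $u(0,\cdot)$ for the unique minimiser condition), and you verify $(i)$--$(v)$ with exactly these ingredients, supplying the Arzel\`a--Ascoli, Tonelli--De Giorgi, and Berge-type details the paper leaves unstated. One point worth keeping: your observation that condition $(v)$ only follows if $m_0$ does not charge the Lebesgue-null set where $u(0,\cdot)$ fails to be differentiable (e.g.\ if $m_0\ll\mathrm{Leb}$) identifies a hypothesis that the paper's assumptions $(*)$ do not explicitly include, so your version is in fact more careful than the source on this step.
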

Under the conditions more restrictive than in $(*)$, the first-order MFG system defined by a coupled Hamilton-Jacobi (backward) and Fokker-Planck (forward) equation:
\begin{equation}\label{MFGintrtro}
	\left\{
	\begin{split}
		(i) \qquad &-\partial_t u +H(x,\nabla u(t,x)) =f(x,m(t)) \\
		(ii) \qquad &\partial_t m  - {\rm div} ( mD_p H (x,\nabla u)) =0  \\
		&m(0,x)=m_0(x) , \; u(T,x)=g(x, m (T)),
	\end{split}\right.
\end{equation}
has at least a weak solution i.e. there are $u : [0,T] \times \R^d \to \R , \; m \in L^\infty ([0,T] \times \R^d, \R) $ such that $u$ is the unique semiconvex viscosity solution of \eqref{MFGintrtro}$(i)$ and $m$ solves \eqref{MFGintrtro}$(ii)$ in distribution sense (see \cite{CDnotesonMFG}, \cite{LL06cr2}).
After solving the MFG system, the equilibrium $\NEprofile \in \A$ is obtained by:
$$\NEprofile (i) = \arg \min_{\y \in \mathcal{AC}([0,T] , \R^d)}  \int_{0}^{T} \left(  L(\gamma_t , \dot{\gamma}_t) + f(\gamma_t , m_t) \right) \; \d t + g(\gamma_T , m_T)
,$$ 
and in addition $ e_t \sharp \NEe = m_t$ for $\NEe = \NEprofile \sharp \m_0$. Lasry, Lions \cite{LL06cr2} have shown that MFG solution is unique if the couplings $f,g$ are strictly monotone i.e. for all $m \neq m' \in \mes(\R^d):$
$$
\int_{\R^d} (f(x,m)-f(x,m')) \d (m-m')(x)> 0, \qquad \int_{\R^d} (g(x,m)-g(x,m')) \d (m-m')(x)> 0
$$
We prove that the uniqueness is a consequence of the monotonicity of the cost function $\J$ and unique minimizer condition:
\begin{lemma}\label{MonotoneMFG}
If $f,g : \R^d \times \mes(\R^d) \to \R$ are monotone, then the MFG cost function will be so.
\end{lemma}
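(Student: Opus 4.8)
The plan is to exploit the fact that the dependence of $\J$ on the measure argument enters only through the coupling terms $f$ and $g$, while the velocity (Lagrangian) contribution is measure-independent and therefore cancels when one forms the difference $\J(\y,\e)-\J(\y,\e')$. Fix $\e,\e' \in \CV$ and set $m_t = e_t \sharp \e$ and $m_t' = e_t \sharp \e'$, which are genuine elements of $\mes(\R^d)$ because each evaluation map $e_t : V \to \R^d$ is continuous. Since every measure in $\CV$ is carried by paths in $H^1$, one has $\e$- and $\e'$-almost everywhere
$$\J(\y,\e) - \J(\y,\e') = \int_0^T \big( f(\y_t,m_t) - f(\y_t,m_t') \big)\,\d t + \big( g(\y_T,m_T) - g(\y_T,m_T') \big),$$
the term $\int_0^T L(\y_t,\dot\y_t)\,\d t$ having dropped out.

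Next I would integrate this identity against $\d(\e-\e')$ and use Fubini to interchange the $t$-integral with the integral over path space. The point is that for each fixed $t$ the integrand $f(\y_t,m_t)-f(\y_t,m_t')$ depends on $\y$ only through $\y_t = e_t(\y)$, so pushing forward by $e_t$ converts the path-space integral into an integral over $\R^d$ against $\d(m_t-m_t')$; the same reduction handles the $g$-term at $t=T$. This produces
$$\int_V \big( \J(\y,\e) - \J(\y,\e') \big)\,\d(\e-\e')(\y) = \int_0^T \!\!\int_{\R^d} \big( f(x,m_t) - f(x,m_t')\big)\,\d(m_t - m_t')(x)\,\d t + \int_{\R^d} \big( g(x,m_T) - g(x,m_T')\big)\,\d(m_T - m_T')(x).$$

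The conclusion is then immediate: monotonicity of $f$ makes the inner $\R^d$-integral nonnegative for every $t\in[0,T]$, and monotonicity of $g$ makes the boundary term nonnegative, so the right-hand side is $\ge 0$, which is exactly the monotonicity inequality required of $\J$. The only genuinely technical point, and the one I would argue with care, is the justification of the Fubini interchange together with the finiteness clause $\int_V |\J(a,\e)|\,\d\e'(a)<+\infty$ in the definition of monotone cost. Both follow from the a priori bounds built into hypotheses $(*)$: every measure in $\CV$ is supported on the precompact set $S_{K,M}$, on which the paths and their velocities are uniformly bounded, so continuity of $L$, $f$, $g$ renders $\J$ bounded on that support and the map $(t,\y)\mapsto f(\y_t,m_t)-f(\y_t,m_t')$ bounded on $[0,T]\times V$, legitimizing the exchange of integrals.
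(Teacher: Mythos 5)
Your proposal is correct and follows essentially the same route as the paper: the Lagrangian term cancels in the difference $\J(\y,\e)-\J(\y,\e')$, the push-forward by $e_t$ reduces the path-space integrals to integrals over $\R^d$ against $m_t-m_t'$, and monotonicity of $f$ and $g$ makes each resulting term nonnegative. Your extra remarks on the Fubini interchange and the integrability clause, justified by the uniform bounds on the support $S_{K,M}$, only add rigor that the paper leaves implicit.
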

\begin{proof}
Let $\eta_1 , \eta_2 \in \mes(V)$. If we define $\m_{i,t} = e_t \sharp \eta_i$ for $i=1,2$ and $ t\in [0,T]$, we then have:
$$ \int_{V} \left(  \J(\gamma , \eta_1) - \J (\gamma , \eta_2) \right) \; \d (\eta_1 - \eta_2)(\gamma) =$$
$$ \int_{V} \left( \int_{0}^{T} \left( f(\gamma_t , \m_{1,t} ) - f(\gamma_t , \m_{2,t} ) \right)  \; \d t + g(\gamma_T , \m_{1,T} ) - g(\gamma_T , \m_{2,T} ) \right) \d (\eta_1 - \eta_2)(\gamma) = A+B$$
where
$$  A= \int_{0}^{T} \left( \int_{\R^d} \left( f(x , \m_{1,t}) - f(x , \m_{2,t})\right) \; \d (\m_{1,t} - \m_{2,t})(x) \right) \d t  \geq 0$$ $$B= \int_{\R^d} \left( g(x , \m_{1,T}) - g(x , \m_{2,T}\right) ) \; \d (\m_{1,T} - \m_{2,T})(x) \geq 0,$$
since the couplings $f,g$ are monotone.
\end{proof}
\begin{corollary}
The Monotone First Order MFG satisfying $(*)$  has at most one equilibrium. We have shown that there is at least one, so the game possesses a unique equilibrium.
\end{corollary}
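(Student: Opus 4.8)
The plan is to assemble the uniqueness from Theorem \ref{NashUnique}, whose two hypotheses — monotonicity of the cost $\J$ and the Unique Minimiser condition — I must verify, and then to combine it with the existence already secured in the preceding corollary. So the corollary is essentially a bookkeeping of facts that are each available, with only one step requiring genuine work.

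First, monotonicity. By hypothesis the couplings $f,g$ are monotone, so Lemma \ref{MonotoneMFG} directly yields that the MFG cost $\J$ is monotone on $\CV$. One should also confirm the integrability requirement $\int_V |\J(\y,\e)| \, \d\e'(\y) < +\infty$ from the definition of monotone cost; this follows because for $\e,\e' \in \CV$ the supports lie in the precompact set $S_{K,M}$, on which the velocity bound $\norm{\dot\y}_{L^2} \leq \sqrt{T}M$ together with the growth conditions $(*)(ii,iii)$ makes $\J$ bounded.

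Second, the Unique Minimiser condition. Fix $\e \in \CV$ and consider $\min_{a \in A_i} \J(a,\e)$. Thanks to the a priori bound \eqref{EULAGbounded} and the choice $M = M(R)$, the minimizers over $A_i = S_{i,M}$ coincide with the unconstrained minimizers over $\mathcal{AC}([0,T],\R^d)$ with fixed initial point; these exist, are $\C^1$, and solve the Euler--Lagrange system. As recalled in the model subsection, the associated value function $u = u^\e$ has $u(0,\cdot)$ Lipschitz, hence differentiable at $m_0$-almost every $x$ by Rademacher's theorem. At each such $x$ the optimal trajectory must solve the initial value problem \eqref{UNEulerLagrange}; since $L(x,\cdot)$ is strongly convex and $\C^2$, the map $v \mapsto L_v(x,v)$ is invertible, so $\dot\y_0$ is recovered uniquely from $-u_x(0,x)$ and the resulting Cauchy problem has a unique solution. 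Hence $BR(i,\e)$ is a singleton for $m_0$-almost every $i$, which is exactly the Unique Minimiser condition. The one point to handle with care is that this must hold for every $\e$ in the closure convex hull $\CV$ and not only in $\mes_G(V)$; this is harmless, since the value function is well defined for any probability measure on $V$ through its time-marginals $e_t\sharp\e$.

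Finally, existence is provided by the preceding corollary: the first-order MFG satisfies the $(\H)$ conditions, so Theorem \ref{THM:NashExist} delivers at least one equilibrium. Combining this with Theorem \ref{NashUnique}, now applicable since $\J$ is monotone and the Unique Minimiser condition holds, gives exactly one Nash equilibrium. The main obstacle is the verification of the Unique Minimiser condition — specifically, turning ``$u(0,\cdot)$ is differentiable almost everywhere'' into ``almost every player has a unique optimal trajectory, for every $\e \in \CV$'' — while the remaining steps only invoke results already established.
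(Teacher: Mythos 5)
Your proposal is correct and follows exactly the route the paper intends: monotonicity of $\J$ via Lemma \ref{MonotoneMFG}, the Unique Minimiser condition via the value-function argument around \eqref{UNEulerLagrange} together with the choice $M=M(R)$, uniqueness from Theorem \ref{NashUnique}, and existence from the preceding corollary. The extra care you take with the integrability requirement and with extending the unique-minimiser property to all of $\CV$ only makes explicit what the paper leaves implicit.
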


\subsection{Fictitious Play in First Order MFG}
The fictitious play in first-order MFG takes such form: for initial profile of actions $$\profile_1 \in \A, \; \bar{\e}_1 = \e_1 = \profile_1 \sharp \lambda \in \mes(V)$$ the players play as follows for round $n=1,2,\ldots$ :
\begin{equation}\label{MFGFicitiousPlay}
\begin{array}{lrll}
(i)&\profile_{n+1}(i) &= &\arg\max_{\y \in H^1 , \y(0) = i} \int_{0}^{T} \left(  L(\gamma_t , \dot{\gamma}_t) + f(\gamma_t , e_t \sharp \bar{\e}_{n}) \right) \; \d t + g(\gamma_T , e_T \sharp \bar{\e}_{n}), \\
(ii)&\e_{n+1} &= &\profile_{n+1} \sharp \lambda ,\\
(iii)&\bar{\e}_{n+1} &= & \frac{1}{n+1} \sum_{i=1}^{n+1} \e_{i}.
\end{array}
\end{equation}
where $(i)$ holds for $m_0-$almost every $i\in \R^d$ . Here we apply the convergence result in fictitious play (Section 3) for monotone first-order MFG. We suppose the $(*)$ (and hence (\H)) conditions hold. 

\begin{lemma}\label{MFGFictiLem}
If $f,g: m \to f(\cdot , m) , g(\cdot , m)$ are Lipschitz from $\mes(\R^d)$ to $\mathcal{C}^1(\R^d)$ then there is a constant $C>0$ such that:
$$\abs
\J(\y,\e) - \J(\y,\e') - \J(\y',\e) + \J(\y',\e')
\abs \leq C \; \norm \y-\y' \norm_{\infty} \; \d_1 (\e , \e')$$
$$\abs
\J(\y,\e) - \J(\y,\e')
\abs \leq C \; \d_1 (\e , \e')$$
for every $\y,\y' \in H^1 ([0,T] , \R^d)$ and $\e,\e' \in \mes(V)$.
\end{lemma}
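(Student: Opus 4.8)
The plan is to exploit the fact that the Lagrangian contribution $\int_{0}^{T} L(\y_t, \dot{\y}_t)\,\d t$ depends only on the path $\y$ and not on the measure argument. Consequently, in both the second difference and the first (cross) difference, every Lagrangian term cancels exactly, and only the couplings $f$ and $g$ survive. For the second inequality one writes
$$\J(\y,\e) - \J(\y,\e') = \int_{0}^{T} \left( f(\y_t, e_t \sharp \e) - f(\y_t, e_t \sharp \e') \right)\,\d t + g(\y_T, e_T \sharp \e) - g(\y_T, e_T \sharp \e'),$$
and for the first inequality the analogous four-term combination, in which again the $L$-terms drop out, leaving only differences of $f$ and $g$ evaluated along $\y$ and $\y'$.

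The second ingredient is a pushforward estimate for the distance $\d_1$. The evaluation map $e_t : V \to \R^d$, $e_t(\y) = \y(t)$, is $1$-Lipschitz because $\abs e_t(\y) - e_t(\y') \abs = \abs \y(t) - \y'(t) \abs \leq \norm \y - \y' \norm_{\infty} = \d_V(\y, \y')$. Hence for any $1$-Lipschitz test function $h$ on $\R^d$, the composition $h \circ e_t$ is $1$-Lipschitz on $V$, and by change of variables pairing $h$ against $e_t \sharp \e - e_t \sharp \e'$ equals pairing $h \circ e_t$ against $\e - \e'$; taking the supremum over such $h$ yields $\d_1(e_t \sharp \e, e_t \sharp \e') \leq \d_1(\e, \e')$ for every $t \in [0,T]$.

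Combining these with the hypothesis finishes the argument. The Lipschitz-in-$\mathcal{C}^1$ assumption supplies a constant $C_0$ with $\norm f(\cdot, m) - f(\cdot, m') \norm_{\mathcal{C}^1}, \; \norm g(\cdot, m) - g(\cdot, m') \norm_{\mathcal{C}^1} \leq C_0\, \d_1(m, m')$, which controls simultaneously the supremum norm and the gradient. For the second inequality, bound each integrand by $\norm f(\cdot, e_t \sharp \e) - f(\cdot, e_t \sharp \e') \norm_{\infty} \leq C_0\, \d_1(e_t \sharp \e, e_t \sharp \e') \leq C_0\, \d_1(\e, \e')$, integrate over $[0,T]$, and treat $g$ identically, giving $C = (T+1) C_0$. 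For the first inequality, set $\psi_t(x) = f(x, e_t \sharp \e) - f(x, e_t \sharp \e')$ and observe $\norm \nabla \psi_t \norm_{\infty} \leq C_0\, \d_1(\e, \e')$ by the hypothesis together with the pushforward bound; the mean value inequality then yields $\abs \psi_t(\y_t) - \psi_t(\y'_t) \abs \leq \norm \nabla \psi_t \norm_{\infty}\, \abs \y_t - \y'_t \abs \leq C_0\, \d_1(\e, \e')\, \norm \y - \y' \norm_{\infty}$, and integrating in $t$, with the identical treatment of $g$ at $t = T$, gives the claimed bound with the same $C = (T+1) C_0$.

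The computations are routine once this structure is laid out; the two points meriting care are the identification of the correct metric on $\mes(\R^d)$ (namely the same Kantorovich-Rubinstein $\d_1$), so that the $\mathcal{C}^1$-Lipschitz hypothesis interfaces cleanly with the pushforward estimate, and the realization that the $\mathcal{C}^1$-control is exactly what is needed: its supremum-norm part settles the second inequality, while its gradient part, through the mean value inequality, produces the factor $\norm \y - \y' \norm_{\infty}$ in the first. I do not expect a genuine obstacle beyond these bookkeeping matters.
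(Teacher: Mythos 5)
Your proposal is correct and follows essentially the same route as the paper: the $L$-terms cancel, the $\mathcal{C}^1$-Lipschitz hypothesis gives the pointwise bounds on $f$ and $g$ (your mean-value-inequality step is exactly how the paper's cross-difference estimate $\abs f(x,m)-f(x,m')-f(x',m)+f(x',m')\abs \leq C\norm x-x'\norm \d_1(m,m')$ is obtained), and the contraction $\d_1(e_t\sharp\e, e_t\sharp\e')\leq \d_1(\e,\e')$ — which you prove explicitly and the paper uses implicitly — closes the argument. No gaps.
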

\begin{proof}
Since $f: m \to f(\cdot , m)$ is Lipschitz from $\mes(\R^d)$ to $\mathcal{C}^1 (\R^d)$ there is $C>0$ such that:
$$ \norm f(\cdot , m) - f(\cdot , m') \norm_{\C^1} \leq C \d_1 (m , m'), \quad \norm g(\cdot , m) - g(\cdot , m') \norm_{\C^1} \leq C \d_1 (m , m')$$
which means that for every $x,x'\in \R^d$ we have
$$\abs f(x , m) - f(x , m') - f(x' , m) + f(x' , m') \abs \leq C \norm x-x' \norm \d_1 (m , m'),$$
$$\abs f(x , m) - f(x , m') \abs \leq C \d_1 (m , m').$$
Similar inequalities hold with respect to $g$. We have:
$$
\abs \J(\y,\e) - \J(\y,\e') - \J(\y',\e) + \J(\y',\e') \abs$$ $$ \leq \int_{0}^{T} \abs f(\y(t) , e_t \sharp \e) - f(\y(t) , e_t \sharp \e') + f(\y'(t) , e_t \sharp \e) - f(\y'(t) , e_t \sharp \e') \abs \; \d t
$$
$$
+ \abs g(\y(T) , e_T \sharp \e) - g(\y(T) , e_T \sharp \e') - g(\y'(T) , e_T \sharp \e) + g(\y'(T) , e_T \sharp \e')\abs
$$
$$
\leq C \int_{0}^{T} \norm \y(t) - \y'(t) \norm \; \d_1(e_t \sharp \e , e_t \sharp \e') \; \d t+
\norm \y(T) - \y'(T) \norm \; \d_1(e_T \sharp \e , e_T \sharp \e')
$$
$$
\leq C \int_{0}^{T} \norm \y - \y' \norm_{\infty} \; \d_1(\e , \e') \; \d t + \norm \y - \y' \norm_{\infty} \; \d_1(\e , \e') = (CT +1) \; \norm \y - \y' \norm_{\infty} \; \d_1(\e , \e'),
$$
and 
$$
\abs \J(\y,\e) - \J(\y,\e') \abs \leq \int_{0}^{T} \abs f(\y(t) , e_t \sharp \e) - f(\y(t) , e_t \sharp \e') \abs \; \d t + \abs g(\y(T) , e_T \sharp \e) - g(\y(T) , e_T \sharp \e') \abs
$$
$$
\leq C \int_{0}^{T} \; \d_1(e_t \sharp \e , e_t \sharp \e') \; \d t+
\d_1(e_T \sharp \e , e_T \sharp \e')
\leq (CT +1) \; \d_1(\e , \e').$$
\end{proof}
\begin{corollary}
If $f,g: m \to f(\cdot , m) , g(\cdot , m)$ are Lipschitz, then by Lemma \ref{MFGFictiLem}, the convergence result of fictitious play (Theorem \ref{FictitiousPlayMonotone}) holds for the first-order montone MFG.
\end{corollary}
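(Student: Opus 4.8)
The plan is to verify that, under the stated hypotheses, the first-order MFG meets every assumption of Theorem \ref{FictitiousPlayMonotone}, so that its conclusion transfers verbatim; the proof is therefore essentially a checklist, with one genuine subtlety. First I would recall that the game
$$G = (\R^d, m_0, \C^0([0,T],\R^d), (S_{i,M})_{i\in\R^d}, \J)$$
was shown in Section 5.1 to satisfy the (\H) conditions: the unique minimiser condition holds because the optimal trajectory is unique at each initial point where $u(0,\cdot)$ is differentiable, hence $m_0$-almost everywhere, and $\CV$ is compact by the tightness argument in the preceding remark. Consequently the abstract recursion \eqref{FicitiousPlay} is well posed, and it coincides with the MFG recursion \eqref{MFGFicitiousPlay}: by \eqref{EULAGbounded} the optimal path starting at $i$ has velocity bounded by $M$, so $\norm \dot{\y} \norm_{L^2} \leq \sqrt{T} M$, placing it in $A_i = S_{i,M}$, whence it equals $BR(i,\bar{\e}_{n})$.

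Next I would supply the two structural hypotheses of Theorem \ref{FictitiousPlayMonotone}. Monotonicity of $\J$ is immediate from Lemma \ref{MonotoneMFG}, given that $f,g$ are assumed monotone. The regularity condition \eqref{LPCONDFP} is exactly the content of Lemma \ref{MFGFictiLem}, once the metric $\d_V$ on $V = \C^0([0,T],\R^d)$ is read as the supremum norm $\norm \y - \y' \norm_\infty$ and the measure metric $\d_1$ is recognized as the same object in both statements.

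The single delicate point, and the one I expect to need care, is a domain mismatch: Theorem \ref{FictitiousPlayMonotone} phrases \eqref{LPCONDFP} for arbitrary $a,b \in V$ and $\e,\e' \in \CV$, whereas Lemma \ref{MFGFictiLem} only delivers the bounds for $\y,\y' \in H^1([0,T],\R^d)$. Off $H^1$ the cost equals $+\infty$, so the left-hand sides of \eqref{LPCONDFP} are not even defined there and the two statements cannot be identified naively. I would resolve this by inspecting where \eqref{LPCONDFP} actually enters the proof of Theorem \ref{FictitiousPlayMonotone}: the Lipschitz estimates are invoked only for actions in the supports of the iterates $\e_n, \bar{\e}_n$ and for the best responses $BR(i,\cdot)$. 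By construction these all lie in $\bigcup_i S_{i,M} \subseteq H^1$, and by the precompactness remark they in fact lie in the compact set $S_{K,M}$ of $H^1$ paths with uniformly bounded velocity; likewise every $\e \in \CV$ is supported on $S_{K,M}$. Thus only the finite-cost regime of \eqref{LPCONDFP} is ever used, and there Lemma \ref{MFGFictiLem} furnishes precisely the required constant $C$.

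With the (\H) conditions, monotonicity of $\J$, and \eqref{LPCONDFP} all in force, Theorem \ref{FictitiousPlayMonotone} applies and yields $\e_n, \bar{\e}_n \xrightarrow{\d_1} \NEe$, the convergence of fictitious play to the unique Nash equilibrium distribution of the monotone first-order MFG, whose existence and uniqueness were already recorded in Section 5.1.
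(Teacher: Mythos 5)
Your proposal is correct and takes essentially the route the paper intends: the corollary is stated without a separate proof, being a direct combination of the (\H) verification from Section 5.1, Lemma \ref{MonotoneMFG} for monotonicity of $\J$, and Lemma \ref{MFGFictiLem} for the Lipschitz conditions \eqref{LPCONDFP}. Your additional care about the mismatch between $H^1([0,T],\R^d)$ and $V=\C^0([0,T],\R^d)$ is a point the paper leaves implicit, and your resolution --- that every iterate, best response, and support of a measure in $\CV$ lies in the set $S_{K,M}\subseteq H^1$, so only the finite-cost regime of \eqref{LPCONDFP} is ever invoked --- is the right way to close it.
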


\subsection{Online Mirror Descent in First Order MFG}

Here we use the convergence result proved in Section 4 for the first-order MFG which have a monotone convex cost function $\J$. So let us suppose that the couplings $f,g$ are monotone and $L(\cdot , \cdot),f(\cdot , m),g(\cdot , m)$ are convex for every $m \in \mes(\R^d)$. It easily yields that $\J$ is monotone (by Lemma \ref{MonotoneMFG}) and for every $\e \in \mes(V)$, the function $\J(\cdot , \e) : H^1 ([0,T] , \R^d) \to \R$ is convex.

Let us set $W = H^1 ([0,T] , \R^d)$ endowed with inner product:
$$\forall \; \y_1, \y_2 \in W : \quad
\langle \y_1 , \y_2 \rangle_{W} = \langle \y_1(0), \y_2(0) \rangle_{\R^d} + \int_0^T \langle \dot{\y}_1(t) , \dot{\y}_2(t) \rangle_{\R^d} \; \d t. $$
We clearly have
$$ \bigcup_{i\in I} A_i \subseteq W \subseteq V, $$
and $A_i$ are uniformly bounded in $W$ for $m_0-$almost every $i\in I$.

For integrable functions $F , D \in L^2 ([0,T], \R )$ and $G \in \R$ we define $y = [[F,D,G]] \in W^*$ by:
$$\langle y , \y \rangle  = \int_{0}^{T} \left( F(t) \cdot \y_t + D(t) \cdot \dot{\y}_t \right)  \; \d t + G \cdot \gamma_T, \quad \text{for every } \y\in W $$
After a few computation we have:
$$\langle y , \y \rangle = \int_{0}^{T} \left( \int_{t}^{T} F(s) \; \d s +D(t) + G \right) \cdot \dot{\y}_t   \; \d t + \left( \int_{0}^{T} F(s) \; \d s + G \right) \cdot \gamma_0 = \langle \y_y , \y \rangle_{W}$$
where $\y_y \in W$ is the representation of $y\in W^*$ that solves:
\begin{equation}
\y_y(0) =\int_{0}^{T} F(s) \; \d s + G , \quad \frac{\d}{\d t} (\y_y) (t) =  \int_{t}^{T} F(s) \; \d s +D(t) + G.
\end{equation}
or
\begin{equation}\label{Reisz}
\y_y (t) = \int_{0}^{T}  F(s) \min(t,s) \; \d s + \int_{0}^{t} D(s) \; \d s +  (t+1)G + \int_{0}^{T} F(s) \; \d s. \end{equation}
If in addition to the condition $(*)(ii,iii)$ we have:
\begin{equation}\label{L_xL_v}
\forall \; (x,v) \in \R^d \times \R^d: \quad \norm L_v(x,v) \norm + \norm L_x(x,v) \norm \leq C ( \norm v \norm +1)
\end{equation}
then one can easily conclude by dominated Lebesgue convergence theorem that the function $\J (\cdot , \e) : W \to \R$ is differentiable for every $\e \in \mes(V)$. So the sub-differential set is singleton i.e. $ \partial \J (\cdot , \e)(\y) = \{ D_{\y} \J (\y,\e) \} \subseteq W^*$ and the derivative is calculated by:
$$\forall \; z\in W: \quad \langle D_{\gamma} \J (\gamma,\eta) , z \rangle = \lim_{\epsilon \to 0} \frac{\J (\gamma + \epsilon z , \eta) - \J(\gamma , \eta)}{\epsilon}$$
$$
= \int_{0}^{T} \left( L_x(\gamma_t,\dot{\gamma}_t)\cdot z_t + L_v(\gamma_t,\dot{\gamma}_t)\cdot \dot{z}_t + f_x (\gamma_t , e_t \sharp \eta)\cdot z_t \right) \; \d t + g_x(\gamma_T , e_T \sharp \eta)\cdot z_T
$$
or according to our representation:
$$D_{\y} \J (\y,\e) = [[ L_x(\y_{(\cdot)},\dot{\y}_{(\cdot)}) + f_x (\y_{(\cdot)} , \; e_{(\cdot)} \sharp \e) , L_v (\y_{(\cdot)},\dot{\y}_{(\cdot)}) , \; g_x (\y_T , e_T \sharp \e) ]].$$
So by the computation in \eqref{Reisz} the gradient $\nabla_{\y} \J (\y,\e)\in W$ is obtained as follows:
\begin{equation}
\begin{split}
\nabla_{\y} \J (\y,\e) (t) &= 
\int_{0}^{T} \left( L_x(\y_{s},\dot{\y}_{s}) + f_x (\y_{s} , e_{s} \sharp \e) \right) \min(t,s) \; \d s + \int_{0}^{t} L_v(\y_{s},\dot{\y}_{s}) \; \d s \\
&+ (t+1)g_x(\y_T , e_T \sharp \e) + \int_{0}^{T} \left( L_x(\y_{s},\dot{\y}_{s}) + f_x (\y_{s} , e_{s} \sharp \e) \right) \; \d s.
\end{split}
\end{equation}
\begin{theorem}\label{OMDMFG}
Suppose a first-order MFG satisfies the $(*),\eqref{L_xL_v}$ conditions. If the cost function $\J$ is monotone and convex w.r.t. first argument, then the online mirror descent algorithm proposed in \eqref{OMD} for $h : W \to \R, \; h(\y) = \frac{1}{2} \norm \y \norm^2_{H^1}$ and $\beta_n= \frac{1}{n}\; (n \in ÷N)$, converges to the unique first-order mean field game equilibrium.

\end{theorem}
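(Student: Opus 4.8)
The plan is to realize Theorem \ref{OMDMFG} as a special case of the abstract convergence result Theorem \ref{OMDNAG}, so that the whole task reduces to checking that the present $W = H^1([0,T],\R^d)$ setting fulfils the standing prerequisites of the OMD algorithm together with the hypotheses (i)--(v) of Theorem \ref{OMDNAG}. First I would dispose of the prerequisites. The space $W$ is a Hilbert space, and for $h(\y) = \tfrac12 \norm \y \norm_{H^1}^2$ the parallelogram identity gives $K$-strong convexity with $K = \tfrac12$; moreover $h$ is weakly lower semi-continuous and coercive. Each action set $A_i = S_{i,M}$ defined in \eqref{M} is convex (an affine initial constraint together with the convex constraint $\norm \dot{\y} \norm_{L^2} \le \sqrt{T}M$) and norm-closed, hence weakly closed by Mazur's theorem, so by Lemma \ref{QLipschitz} each $Q_{A_i}$ is single-valued. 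Finally, the assumed convexity of $L(\cdot,\cdot)$, $f(\cdot,m)$, $g(\cdot,m)$ makes $\J(\cdot,\e)$ convex, and under $(*)(ii,iii)$ and \eqref{L_xL_v} it is differentiable with the gradient computed above, so the required subgradient exists and is the singleton $\{D_\y \J(\y,\e)\}$.

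Next I would verify hypotheses (i)--(iv). Condition (i) is precisely the corollary already establishing that the first-order MFG satisfies the (\H) conditions. For (ii), the sets $A_i$ are convex and uniformly bounded in $W$, since $\norm \y \norm_{H^1}^2 = \abs \y(0) \abs^2 + \norm \dot{\y} \norm_{L^2}^2 \le R^2 + T M^2$, using the compact support of $m_0$ (so $\abs \y(0) \abs \le R$) and the velocity bound; consequently $R(M) = \tfrac12 M^2 < +\infty$. Condition (iii) is met by choosing the initial dual $\dual_0$ bounded, e.g. $\dual_0 \equiv 0$. Condition (iv), monotonicity of $\J$, follows from Lemma \ref{MonotoneMFG} as $f,g$ are assumed monotone.

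The crux is condition (v), the uniform subgradient bound \eqref{OMDcondition}. The key observation is that every iterate $\profile_n(i) = Q_{A_i}(\dual_n(i))$ lies in $A_i = S_{i,M}$, so as a path $\y := \profile_n(i)$ it satisfies $\norm \dot{\y} \norm_{L^2} \le \sqrt{T}M$ and $\y(0) = i \in \mathrm{supp}(m_0)$; the elementary estimate $\abs \y(t) - \y(s) \abs \le \norm \dot{\y} \norm_{L^2}\, \abs{t-s}^{1/2}$ then bounds $\norm \y \norm_{\infty} \le R + T M$ by a constant independent of $n$ and $i$. Along such a trajectory, $(*)(iii)$ bounds $\norm f_x(\y_t, e_t \sharp \e) \norm$ and $\norm g_x(\y_T, e_T \sharp \e) \norm$ uniformly, the bound there being independent of the measure argument, while \eqref{L_xL_v} gives $\norm L_x(\y_t,\dot{\y}_t) \norm + \norm L_v(\y_t,\dot{\y}_t) \norm \le C(\norm \dot{\y}_t \norm + 1)$, controlled in $L^2([0,T])$ by $\norm \dot{\y} \norm_{L^2}$. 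Substituting these pointwise and $L^2$ bounds into the Riesz representation \eqref{Reisz} yields $\norm y(\profile_n(i),\e) \norm_{W^*} = \norm \y_{y(\profile_n(i),\e)} \norm_W \le k$ for a constant $k$ depending only on $C, b, R, M, T$, valid uniformly in $n$, for $\lambda$-almost every $i$, and---crucially, since the coupling bounds are measure-independent---simultaneously for $\e = \e_n$ and $\e = \NEe$. This is exactly \eqref{OMDcondition}.

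With every hypothesis in force, Theorem \ref{OMDNAG} applies and gives $\e_n = \profile_n \sharp m_0 \to \NEe$, the unique equilibrium distribution, which is the assertion. I expect the only genuinely delicate point to be step (v): one must translate the pointwise growth conditions on $L,f,g$ into a bound on the $W^*$-norm through the representation \eqref{Reisz}, and make sure this bound is independent both of the iteration index $n$ and of the measure facing the player, so that it covers $\e_n$ and $\NEe$ at once.
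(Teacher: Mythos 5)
Your proposal is correct and follows essentially the same route as the paper: reduce to Theorem \ref{OMDNAG} by checking its hypotheses, with the only substantive work being the uniform subgradient bound \eqref{OMDcondition}, obtained exactly as you describe from the growth conditions $(*)(iii)$ and \eqref{L_xL_v} together with the uniform $H^1$-bound on the action sets $S_{i,M}$ and the representation \eqref{Reisz}. Your write-up is in fact somewhat more careful than the paper's (which compresses step (v) into a single displayed inequality), and you correctly flag the point the paper leaves implicit, namely that the bound on $f_x,g_x$ is uniform in the measure argument so that it covers $\e_n$ and $\NEe$ simultaneously.
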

\begin{proof}
The function $h : W \to \R, \; h(\y) = \frac{1}{2} \norm \y \norm^2_{H^1}$ is $\frac{1}{2}-$strongly convex function and lower semicontinuous for the weak topology, so the mirror projection $Q_{A_i}$ will have singleton values. 

The game satisfies the (\H) conditions. By $(*)(ii)$ the derivatives $f_x(\y_t , m),g_x(\y_t,m)$ are bounded by a power of $\norm \y \norm_{H^1}$. Then since the condition \eqref{L_xL_v} holds, there are $C' , \alpha >0$ such that for $\lambda-$almost every $i\in I$:
$$ \max ( \norm D_\y \J (\profile_{n}(i) , \NEe) \norm_{W^*} , \norm D_\y \J (\profile_{n}(i) , \e_n)  \norm_{W^*} ) \leq C' ( \norm \dot{\profile}_{n}(i) \norm_{L^2 ([0,T] , \R^d)}^{\alpha} + 1) \leq C'((\sqrt{T}M)^\alpha +1).$$
So all of the conditions in Theorem \ref{OMDNAG} are satisfied and the desired convergence result holds.
\end{proof}
\begin{remark}
Since the space $H^1([0,T] , \R^d)$ is Hlibert, we identify it by its dual space. Hence by choice $h(\y) = \frac{1}{2} \norm \y \norm^2_{H^1}$ we have:
$$Q_{A_i} (\y) = \pi_{A_i}(\y) = \frac{\min (\norm \dot{\y} \norm_{L^2} , \sqrt{T} M) }{\norm \dot{\y} \norm_{L^2} }(\y - \y_0) + i .$$
by the choice of $A_i$. Then, the OMD algorithm have such form
\begin{equation}\label{OMDMFG}
\begin{array}{lllll}
(i)&\dual_{n+1}(i)&= &\dual_n (i) - \frac{1}{n} \nabla \J (\profile_n (i) , \e_n), &\text{for every} \; i \in I \\
(ii)&\profile_{n+1}(i) &= &\frac{\min (\norm \dot{\dual}_{n+1}(i) \norm_{L^2} , \sqrt{T} M)}{\norm \dot{\dual}_{n+1}(i) \norm_{L^2} }(\dual_{n+1}(i) - \dual_{n+1}(i)_0) + i  , &\text{for every} \; i \in I \\
(iii)&\e_{n+1} &= &\profile_{n+1} \sharp \lambda.&
\end{array}
\end{equation}
or in explicit way it takes the followin form: let $\hat{\y}_{0,x} = 0$ for every $x\in \R^d$ and:
\begin{equation}
\begin{split}
\hat{\y}_{n+1 , x} (t) &= \hat{\y}_{n,x}(t) - \frac{1}{n} \int_{0}^{T} \left( L_x(\y_{n , x} (s),\dot{\y}_{n , x} (s)) + f_x (\y_{n , x} (s) , e_{s} \sharp \e_n) \right) \min(t,s) \; \d s \\ 
&- \frac{1}{n} \int_{0}^{t} L_v (\y_{n , x} (s),\dot{\y}_{n , x} (s)) \; \d s
- \frac{t}{n} g_x(\y_{n , x} (T) , e_T \sharp \e_n), \\
\y_{n+1 , x} &= c_{n+1} \hat{\y}_{n+1,x} + x, \quad c_{n+1} = \frac{\min (\norm \dot{\hat{\y}}_{n+1,x} \norm_{L^2} , \sqrt{T} M)}{\norm \dot{\hat{\y}}_{n+1,x} \norm_{L^2} }, \\
\e_{n+1} &= \y_{n+1 , \cdot} \sharp \lambda.
\end{split}
\end{equation}
\end{remark}

\section{Appendix}
Here we demonstrate some disintegration lemmas used in the precedent proofs. Suppose $I$ a Polish space and $V$ a metric space. Let $A: I \to V$ be a correspondence with  $A(i)=A_i$. For a Borel probability measure $\lambda \in \mes(I)$ we say $\e \in \mes (V)$ \textit{disintegrates} with respect to $(A_i)_{i\in I}$ if there are $\{\e^i\}_{i\in I} \subset \mes (V) $ such that
$$\text{for $\lambda-$almost every } i\in I: \quad \text{supp}(\e^i) \subseteq A_i,$$
$$\text{for every bounded measurable } f:V\to \R: \quad \int_V f(a) \; \d \e (a) = \int_I \int_V f(a) \; \d \e^i (a) \; \d \lambda (i).$$

\begin{theorem}\label{THM:Disint}
Suppose $A: I \to V$ be upper semi continuous. Let $\{\e_n\}_{n\in \N} \subseteq \mes_1 (V)$ with $\e_n \to \e$ in weak sense. If for every $n\in \N$, $\e_n$ disintegrates with respect to $(A_i)_{i\in I}$ then the same holds true for $\e$.
\end{theorem}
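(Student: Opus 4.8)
The plan is to realize each disintegration as a coupling on the product space $I\times V$, extract a weak limit of these couplings, and disintegrate the limit with respect to $\lambda$.

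First I would lift the data. For each $n$ the hypothesis provides a family $\{\e_n^i\}_{i\in I}\subseteq\mes(V)$ with $\text{supp}(\e_n^i)\subseteq A_i$ for $\lambda$-almost every $i$ and $\int_V f\,\d\e_n=\int_I\int_V f\,\d\e_n^i\,\d\lambda(i)$ for bounded measurable $f$. Define $\mu_n\in\mes(I\times V)$ by $\int_{I\times V}g(i,a)\,\d\mu_n=\int_I\int_V g(i,a)\,\d\e_n^i(a)\,\d\lambda(i)$ for bounded measurable $g$, i.e. $\mu_n=\int_I\delta_i\otimes\e_n^i\,\d\lambda(i)$. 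By construction the first marginal of $\mu_n$ is $\lambda$, the second marginal is $\e_n$, and $\mu_n$ is concentrated on the graph $\Gamma=\{(i,a)\in I\times V:a\in A_i\}$, which is closed since $A$ is upper semicontinuous.

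Next I would extract a limit. The first marginals are all the single (hence tight) measure $\lambda$, while the second marginals $\e_n$ form a weakly convergent, and therefore tight, family; tightness of both marginal families yields tightness of $\{\mu_n\}$ on $I\times V$, so by Prokhorov's theorem a subsequence $\mu_{n_k}$ converges weakly to some $\mu\in\mes(I\times V)$. Since passing to marginals is continuous for weak convergence, $\mu$ has first marginal $\lambda$ and second marginal $\lim_k\e_{n_k}=\e$. Moreover, as $\Gamma$ is closed and $\mu_{n_k}(\Gamma)=1$ for all $k$, the portmanteau theorem gives $\mu(\Gamma)\geq\limsup_k\mu_{n_k}(\Gamma)=1$, so $\mu$ is concentrated on $\Gamma$.

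Finally I would disintegrate $\mu$ with respect to its first marginal $\lambda$, which is legitimate because $I$ is Polish; this produces a measurable family $\{\e^i\}_{i\in I}\subseteq\mes(V)$ with $\mu=\int_I\delta_i\otimes\e^i\,\d\lambda(i)$. Because $\mu(\Gamma)=1$, for $\lambda$-almost every $i$ the slice $\e^i$ charges only the closed set $A_i$, i.e. $\text{supp}(\e^i)\subseteq A_i$, and reading off the second marginal gives $\int_V f\,\d\e=\int_I\int_V f\,\d\e^i\,\d\lambda(i)$ for every bounded measurable $f$, which is exactly the disintegration claimed for $\e$. The \emph{main obstacle} is the tightness step: one must guarantee relative compactness of the joint laws $\{\mu_n\}$, which rests on the tightness of the weakly convergent second marginals $\{\e_n\}$ together with the fixed first marginal $\lambda$. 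Once this is in hand, the transfer of the support condition to the limit follows cleanly from the closedness of the graph via portmanteau and from the disintegration theorem on Polish spaces.
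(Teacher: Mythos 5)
Your proposal is correct and follows essentially the same route as the paper: lift each disintegration to a coupling $\mu_n=\int_I\delta_i\otimes\e_n^i\,\d\lambda(i)$ on $I\times V$, use tightness of the marginals to extract a weak limit $\mu$ with marginals $\lambda$ and $\e$, observe via portmanteau (equivalently, the upper semicontinuity of the indicator of the closed graph, which is what the paper uses) that $\mu$ remains concentrated on the graph, and then disintegrate $\mu$ over $\lambda$. No substantive difference from the paper's argument.
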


\begin{proof}
For every $n \in \N$, define $m_n \in \mes(I \times V)$ as follows:
$$\text{for every bounded measurable } f:I \times V\to \R: \quad \int_{I \times V} f(i,a) \; \d m_n (i,a) = \int_I \int_V f(i,a) \; \d \e^i_n (a) \; \d \lambda (i).$$
Obviously $\pi_I \sharp m_n = \lambda, \pi_V \sharp m_n = \e_n$ where $\pi_I , \pi_V$ are respectively projections of $I \times V$ on $I,V$. Since $\{ \e_n \}$ are tight and $I$ is a Polish space, for every $\epsilon > 0$, there is a compact set $I_\epsilon \subseteq I , K_\epsilon \subseteq V$ such that $\lambda(I \setminus I_\epsilon) , \e_n (V \setminus K_\epsilon) < \epsilon$ for all $n \in \N$. In addition
$$m_n(I_\epsilon \times K_\epsilon) \geq 1 - m_n(I \times V \setminus K_\epsilon) - m_n(I \setminus I_\epsilon \times V) = 1 - \e_n( V \setminus K_\epsilon) - \lambda (I \setminus I_\epsilon) \geq 1 - 2 \epsilon,$$
which means the set $\{ m_n \}_{n\in \N}$ is tight too. Hence there exists $m \in \mes(I \times V)$ and a subsequence $\{ m_{n_k} \}_{k\in \N}$ such that $m_{n_k} \to m$. We directly have $\e_{n_k} = \pi_V \sharp m_{n_k} \to \pi_V \sharp m$ which means $\pi_V \sharp m = \e$. On the other hand, due to the disintegration theorem (see \cite{AGS} page 121) there are $m^i \in \mes(V)$ for every $i\in I$, such that
$$\text{for every bounded measurable } f:I \times V\to \R: \quad \int_{I \times V} f(i,a) \; \d m (i,a) = \int_I \int_V f(i,a) \; \d \m^i (a) \; \d \lambda (i).$$
So since the second marginal of $m$ is $\e$, we can write:
$$\text{for every bounded measurable } f: V\to \R: \quad \int_V f(a) \; \d \e (a) = \int_I \int_V f(a) \; \d \m^i (a) \; \d \lambda (i).$$
So what is left is to show that for $\lambda-$almost every $ i\in I$ we have $\text{supp}(m^i) \subseteq A_i$. Set $f: I \times V \to \R$ as $f(i,a) = 1_{a\in A_i}$. We know the function $f$ is upper semi continuous since the correspondence $A: I \to V,\; A(i)=A_i$ is upper semi continuous. For every $n\in \N$ we have:
$$\int_{I \times V} f(i,a) \; \d m_n (i,a)= \int_I \int_V f(i,a) \; \d \e^i_n (a) \; \d \lambda (i) = 1.$$
Hence
$$1 = \limsup_{k} \int_{I \times V} f(i,a) \; \d m_{n_k} (i,a) \leq \int_{I \times V} f(i,a) \; \d m (i,a) \leq 1,$$
so $\int_{I \times V} f(i,a) \; \d m (i,a)=1$ which is equivalent to say for $\lambda-$almost every $ i\in I$ we have $\text{supp}(m^i) \subseteq A_i$.
\end{proof}
\begin{corollary}\label{COR:DisintCOV}
Every element  $\e \in \CV$ disintegrates with respect to $(A_i)_{i\in I}, \lambda \in \mes(I)$.
\end{corollary}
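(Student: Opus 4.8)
The plan is to build disintegrability up in three stages and then reduce the general case to a weak limit so that Theorem \ref{THM:Disint} can be invoked. The heavy lifting (passing the support constraint $\text{supp}(\e^i)\subseteq A_i$ through a weak limit) has already been done in that theorem, so the work here is to verify its hypotheses and to establish disintegrability on the dense set $\cov(\mes_G(V))$.

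First I would treat the generators. If $\e \in \mes_G(V)$, then by definition $\e = \profile \sharp \lambda$ for some admissible profile $\profile \in \A$, and the natural candidate is $\e^i = \delta_{\profile(i)}$. Since $\profile(i) \in A_i$ for \ae, we have $\text{supp}(\e^i) = \{\profile(i)\} \subseteq A_i$, while the change-of-variables formula for push-forwards gives, for every bounded measurable $f : V \to \R$, the identity $\int_V f(a)\,\d\e(a) = \int_I f(\profile(i))\,\d\lambda(i) = \int_I \int_V f(a)\,\d\e^i(a)\,\d\lambda(i)$. Hence every element of $\mes_G(V)$ disintegrates.

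Second, I would show disintegrability is preserved under finite convex combinations, giving it on all of $\cov(\mes_G(V))$. Given $\e = \sum_{k=1}^m t_k \nu_k$ with $t_k \geq 0$, $\sum_k t_k = 1$, and each $\nu_k$ disintegrated by a family $\{\nu_k^i\}_{i\in I}$, I set $\e^i = \sum_k t_k \nu_k^i$. Each $\e^i$ is again a probability measure, the integral identity follows by linearity, and on the intersection of the finitely many full-measure sets on which $\text{supp}(\nu_k^i)\subseteq A_i$ (itself of full measure) one has $\text{supp}(\e^i) \subseteq \overline{\bigcup_k \text{supp}(\nu_k^i)} \subseteq A_i$, using that $A_i$ is compact, hence closed, by $(\H)(i)$.

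Finally, for a general $\e \in \CV$, since $(\mes_1(V), \d_1)$ is a metric space I would pick a sequence $\e_n \in \cov(\mes_G(V))$ with $\e_n \xrightarrow{\d_1} \e$; convergence in $\d_1$ implies weak convergence, and each $\e_n$ disintegrates by the previous step. Because $A$ is upper semi-continuous by $(\H)(i)$ and all the measures involved lie in $\mes_1(V)$, Theorem \ref{THM:Disint} applies verbatim and produces a disintegration of $\e$. The only points genuinely requiring attention are the closedness of $A_i$ in the support argument and the implication from $\d_1$-convergence to weak convergence needed to meet the hypotheses of Theorem \ref{THM:Disint}; both are routine, so I expect no serious obstacle once the two preliminary stages are in place.
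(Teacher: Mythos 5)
Your proposal is correct and follows essentially the same route as the paper: the paper observes that the set of disintegrable measures contains $\mes_G(V)$ (via Dirac disintegrations of push-forwards), is convex, and is closed by Theorem \ref{THM:Disint}, hence contains $\CV$. You merely make explicit the two steps the paper treats as immediate — the convexity (your finite convex combination argument) and the passage to the $\d_1$-limit — so there is nothing substantive to add.
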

\begin{proof}
Let $\Sc \subset \mes(V)$ be the set of all measures which disintegrates with respect to $(A_i)_{i\in I}$. Clearly $\Sc$ is convex and due to Theorem \ref{THM:Disint} it is closed. Also, we have $\mes_G (V) \subseteq \Sc$ since for all $\profile \in \A$ we have
$$\text{for every bounded measurable } f:V\to \R: \quad \int_V f(a) \; \d (\profile \sharp \lambda) (a) = \int_I \int_V f(a) \; \d \delta_{\profile(i)} (a) \; \d \lambda (i),$$
hence it gives $\CV \subseteq \Sc$. 
\end{proof}

\paragraph{Acknowledgement.} I would very much like to thank Panayotis Mertikopoulos, Sylvain Sorin and my Phd directors Pierre Cardaliaguet and Rida Laraki, who provided insight and expertise that greatly assisted this research as well as comments that improved the manuscript. However, the responsibility of the materials are completely by the author. In addition, the author was partially supported by the ANR (Agence Nationale
de la Recherche) projects ANR-14-ACHN-0030-01 and ANR-16-CE40-0015-01.

\end{document}